\documentclass[12pt]{amsart}
\setlength{\oddsidemargin}{.25in}
\setlength{\evensidemargin}{.25in} \setlength{\textwidth}{6in}
\setlength{\topmargin}{-0.3in} \setlength{\textheight}{8.6in}

\usepackage{amsmath,amsthm}
\usepackage{amssymb}
\usepackage{amsfonts}
\usepackage{amscd}

\newtheorem{thm}{Theorem}[section]
\newtheorem{theorem}[thm]{Theorem}

\newtheorem{corollary}[thm]{Corollary}
\newtheorem{lemma}[thm]{Lemma}

\newtheorem{proposition}[thm]{Proposition}

\theoremstyle{definition}

\newtheorem{remark}[thm]{Remark}


\newcommand{\N} {\mathbf{N}}
\newcommand{\R} {\mathbb{R}}
\newcommand{\Z} {\mathbf{Z}}

\newcommand{\C} {\mathbb{C}}

\newcommand{\cR}{\mathcal{R}}

\newcommand{\cH}{\mathcal {H}}

\newcommand{\cM}{{\mathcal M}}

\newcommand{\cP}{\mathcal{P}}

\newcommand{\cU}{\mathcal{U}}

\newcommand{{\cHol}}{{\mathcal{{H}}}}

\newcommand{{\cLf}}{\mathcal{L}_{\mathrm{f}}}
\newcommand{{\cLfo}}{\wt{\cLf}} 


\newcommand{\al}{\alpha}
\newcommand{\be}{\beta}
\newcommand{\ga}{\gamma}

\newcommand{\la}{\lambda}


\newcommand{\fgc}{{{\mathfrak g}_\C}}
\newcommand{\fkc}{{\mathfrak k}_\C}
\newcommand{\fhc}{{\mathfrak h}_\C}
\newcommand{\fpc}{{\mathfrak p}_\C}
\newcommand{\fg}{\mathfrak g}
\newcommand{\fgR}{\mathfrak{g}}
\newcommand{\fkR}{\mathfrak{k}}
\newcommand{\fpR}{\mathfrak{p}}

\newcommand{\fc}{\mathfrak c}

\newcommand{\fh}{\mathfrak h}
\newcommand{\fk}{\mathfrak k}

\newcommand{\fsl}{\mathfrak{sl}}

\newcommand{\bbar}{\overline b}


\newcommand{\rSL}{\mathrm{SL}}

\newcommand{\rOsp}{{\mathrm{Osp}}}
\newcommand{\rosp}{{\mathrm{osp}}}

\newcommand{\ad}{\mathrm{ad}}
\newcommand{\Lie}{\mathrm{Lie}}

\newcommand{\rk}{{\mathrm{rk}}}





\usepackage{color}

\newcommand{\lra} {\longrightarrow}
\newcommand{\beq} {\begin{equation}}
\newcommand{\eeq} {\end{equation}}
\newcommand{\xibar} {\overline{\xi}}

\begin{document}

\centerline{\Large\bf Unitary Harish-Chandra representations} 

\bigskip

\centerline{\Large \bf of real supergroups}

\bigskip

\centerline{C. Carmeli$^\natural$, R. Fioresi$^\flat$, V. S. Varadarajan $^\star$ }

\bigskip
\centerline{\it $^\natural$ DIME, 
Universit\`a di Genova, Genova, Italy}
\centerline{{\footnotesize e-mail: carmeli@dime.unige.it}}

\medskip
\centerline{\it $\flat$ Universit\`{a} di Bologna, Bologna, Italy }
\centerline{{\footnotesize e-mail: rita.fioresi@unibo.it}}

\medskip
\centerline{\it $^\star$
UCLA, Los Angeles, CA 90095-1555, USA} 

\bigskip
\small
{\bf Abstract}. We give conditions for unitarizability of
Harish-Chandra super modules for Lie supergroups and superalgebras.

\normalsize
\section{Introduction}
\label{intro-sec}
Let $\fg$ be a real Lie superalgebra. 
It is natural to ask how to define the concept of 
(infinitesimal) \textit{unitarity or
unitarizability} for a super module $V$ for $\fg$ and how to
obtain, starting from $V$, a unitary module for $G$ a Lie supergroup 
with $\fg=\Lie(G)$, $G_0$ simply connected.
Let $\gamma$ be the representation
of $\fg$ in $V$. 
We say $\gamma$ is unitary if
$V$ is equipped with an hermitian product in which $V_0$ and $V_1$ are
orthogonal, and the following conditions are met 
(see  \cite{CCTV, jakobsen} and also \cite{neeb, salmasian}):\\
{\bf (U1)} For all $Z \in \fg_0$, $i\gamma(Z)$ is symmetric on $V$.\\
{\bf (U2)} For all $X \in \fg_1$, $\rho(X) := e^{-i\pi/4}\gamma(X)$ is
symmetric on $V$.

These are not enough in general to define in the completion $\cH$ of $V$ a 
unitary representation of a Lie supergroup $G$, with $\fg=\Lie(G)$,
whose infinitesimal form on $V$ is $\gamma$. Indeed,
as was remarked in Nelson \cite{nelson}, this is already not enough in the 
classical setting, that is when $\fg_1=0$. 
In general, we need an additional condition: \\
{\bf (U3)} There is an even unitary representation $\pi_0$ of $G_0$, 
the simply connected group  defined by $\fg_0$, on the completion 
$\cH$ of $V$ such that  $d\pi_0 (Z)$ is defined on
$V$ for all  $Z\in\fg_0$ and coincides with $\gamma(Z)$ on $V$; 
in the usual notation $V \subset \cH$ and
$\gamma(Z) \prec d\pi_0(Z)$, $Z \in \fg_0$
(see \cite{rs} Ch. 8 for definitions and notation).

We recall here that $d\pi_0(Z)$ is the unique self adjoint 
operator on $\cH$ such that $\pi_0(exp(tZ)) = e^{itd\pi_0(Z)}$. Then, 
Proposition 3 of \cite{CCTV} leads to the
following theorem.

\begin{theorem} \label{theorem1} 
Let $V$ be a module for a real Lie superalgebra $\fg$, 
via the representation $\gamma$
such that conditions (U1)-(U3) are satisfied. Suppose that 
$V \subset C^\omega(\pi_0)$.
Then each $\rho(X)$ ($X \in \fg_1$) is essentially 
self-adjoint on $V$ with $C^\infty(\pi_0) \subset D(\overline{\rho(X)})$, 
and there is a unique unitary representation $(\pi_0, \rho, \cH)$ of the 
Lie supergroup $(G_0, \fg)$ in $\cH$
such that $\rho(X)$ is the restriction to  $C^\infty(\pi_0)$ of 
$\overline{\rho(X)}$ for all $X \in \fg_1$.
\end{theorem}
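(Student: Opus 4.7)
The approach is to verify the hypotheses of Proposition~3 of \cite{CCTV}, leveraging Nelson's analytic vector theorem and the elementary super-algebraic identity $[X,X]=2X^2\in\fg_0$ for $X\in\fg_1$.

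\emph{Step 1: essential self-adjointness of $\rho(X)$ on $V$.} By (U2), $\rho(X)$ is symmetric on $V$. A direct computation gives
\[
\rho(X)^2 \;=\; e^{-i\pi/2}\gamma(X)^2 \;=\; -\tfrac{i}{2}\,\gamma([X,X]) \qquad \text{on } V.
\]
Since $[X,X]\in\fg_0$, condition (U3) identifies $\gamma([X,X])$ on $V$ with (a restriction of) the self-adjoint generator $d\pi_0([X,X])$. Every $v\in V\subset C^\omega(\pi_0)$ is therefore analytic for this generator, so there exist $C,r>0$ with $\|\gamma([X,X])^n v\|\le C\, n!\, r^{-n}$. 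These bounds transfer to $\rho(X)$: even powers satisfy $\|\rho(X)^{2n}v\|=2^{-n}\|\gamma([X,X])^n v\|$, and odd powers are controlled by the interpolation inequality
\[
\|\rho(X)^{2n+1}v\|^2\;\le\; \|\rho(X)^{2n}v\|\,\|\rho(X)^{2n+2}v\|,
\]
valid because $\rho(X)$ is symmetric. Consequently $v$ is an analytic vector for $\rho(X)$, and Nelson's theorem \cite{nelson} yields essential self-adjointness of $\rho(X)$ on $V$.

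\emph{Step 2: extension to $C^\infty(\pi_0)$.} Set $\widetilde\rho(X):=\overline{\rho(X)}$. Because $V$ consists of analytic vectors for $\widetilde\rho(X)$, it is a core for every power of $\widetilde\rho(X)$, in particular for the self-adjoint operator $\widetilde\rho(X)^2$. On the other hand, analytic vectors for a one-parameter unitary group form a core for its generator, so $V$ is also a core for $d\pi_0([X,X])$. The two self-adjoint operators $\widetilde\rho(X)^2$ and $-\tfrac{i}{2}d\pi_0([X,X])$ agree on the common core $V$ (by Step~1) and therefore coincide. Hence $D(\widetilde\rho(X)^2)=D(d\pi_0([X,X]))\supseteq C^\infty(\pi_0)$, and since $D(A^2)\subseteq D(A)$ for any self-adjoint $A$ (by spectral calculus), we conclude $C^\infty(\pi_0)\subseteq D(\widetilde\rho(X))$.

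\emph{Step 3: conclusion.} With essential self-adjointness of $\rho(X)$ on $V$ and the inclusion $C^\infty(\pi_0)\subseteq D(\overline{\rho(X)})$ in hand, Proposition~3 of \cite{CCTV} applies and produces the desired unique unitary representation $(\pi_0,\rho,\cH)$ of the Lie supergroup $(G_0,\fg)$, with $\rho(X)$ the restriction of $\overline{\rho(X)}$ to $C^\infty(\pi_0)$ for every $X\in\fg_1$.

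The main obstacle is Step~1: transferring analyticity from the ``square'' $\gamma([X,X])\sim d\pi_0([X,X])$, where analyticity is directly furnished by the hypothesis $V\subset C^\omega(\pi_0)$, to its ``square root'' $\rho(X)$. The elementary interpolation bound for symmetric operators is precisely what permits this passage and treats the odd and even powers uniformly; Step~2 is then essentially a bookkeeping consequence once the algebraic identity of Step~1 is promoted to an identity of self-adjoint operators.
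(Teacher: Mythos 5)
The paper gives no internal argument for this theorem: it fixes the convention for $d\pi_0(Z)$ and then states that the result follows from Proposition 3 of \cite{CCTV}. Your proposal is therefore essentially the paper's route --- you too conclude by invoking that proposition --- but you additionally reconstruct the analytic mechanism behind it: the identity $\rho(X)^2=-\tfrac{i}{2}\gamma([X,X])$ on $V$, the transfer of analytic-vector bounds from $d\pi_0([X,X])$ to $\rho(X)$ via the interpolation inequality for symmetric operators, and Nelson's theorem. Step 1 is correct as written and is exactly the argument used inside \cite{CCTV}.

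Step 2, however, rests on two justifications that are not valid as stated, even though the conclusions are true and easily repaired. First, ``$V$ consists of analytic vectors for $\overline{\rho(X)}$, hence is a core for every power of $\overline{\rho(X)}$'' is not a general principle: an analytic vector for a self-adjoint $A$ satisfies only $\|(A^2)^n v\|\le C(2n)!\,r^{-2n}$, i.e.\ it is merely a semianalytic (Stieltjes) vector for $A^2$, so Nelson does not apply to $A^2$ on $V$ without further input (one would need the Nussbaum/Masson--McClary theorem for the semibounded operator $A^2$). The shorter correct route is already in your Step 1: on $V$ one has $\rho(X)^2=-\tfrac{i}{2}\gamma([X,X])$, and $V\subset C^\omega(\pi_0)$ gives $\|\gamma([X,X])^n v\|\le C\,n!\,r^{-n}$ directly, so $V$ is a dense, invariant set of analytic vectors for the positive symmetric operator $\rho(X)^2|_V$; Nelson makes it essentially self-adjoint there, and since $\overline{\rho(X)}^{\,2}$ is a self-adjoint extension of it, $V$ is a core for $\overline{\rho(X)}^{\,2}$. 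Second, ``analytic vectors for a one-parameter group form a core for its generator, so $V$ is a core for $d\pi_0([X,X])$'' conflates the set of \emph{all} analytic vectors with the subset $V$; the correct statement again comes from Nelson ($V$ is dense, invariant under $\gamma([X,X])$, and consists of analytic vectors). Finally, with the paper's literal convention $\pi_0(\exp tZ)=e^{it\,d\pi_0(Z)}$ the operator $-\tfrac{i}{2}d\pi_0([X,X])$ is not self-adjoint; this sign issue is inherited from the paper's own phrasing of (U3) (where the skew-symmetric $\gamma(Z)$ is said to coincide with the self-adjoint $d\pi_0(Z)$), so fix the normalization consistently. With these repairs your argument is sound and, like the paper, ultimately rests on Proposition 3 of \cite{CCTV} for existence and uniqueness of the supergroup representation.
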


The shortcoming of this theorem is that it assumes the existence of $\pi_0$. 
As we shall see, when $\fg_0$ is reductive and the
modules are Harish-Chandra modules of $(\fg, \fk)$-type, 
then we can dispense with (U3) entirely. 
As a notational convention, when
we say that some module is a Harish-Chandra module, 
we assume it is already of $(\fg,\fk)$
type, i.e., each vector in the module lies in a direct sum of a finite number
of irreducible $\fk$-modules, with $\fk$ the maximal compact 
subalgebra in $\fg$ (see \cite{cfv}). 
We also recall that $\fk$ is not always 
semisimple and so not all of its finite dimensional modules are 
completely reducible;
the condition for complete reducibility is that each element of the center
of $\fk$ acts semisimply in the representation space.
We also shall use freely the super Harish-Chandra pairs (SHCP) terminology;
for all the notation and preliminaries on supergeometry refer to 
\cite{ccf, vsv2} Ch. 4 and 7, besides the classical references 
\cite{BerLeites, ma, Kostant, koszul}.

\medskip
We shall prove the following.

\begin{theorem}\label{main0}
Let $\fg$ be a real Lie superalgebra with $\fg_{0}$ reductive acting
via $\gamma$ on a complex vector superspace $V$. Assume:\\
(U1) For all $Z \in \fg_0$, $i\gamma(Z)$ is  symmetric on $V$.\\
(U2) For all $X \in \fg_1$, $\rho(X) := e^{-i\pi/4}\gamma(X)$ is 
 symmetric on $V$.\\
Let $G_0$ 
be the simply connected Lie group defined by $\fg_0$ and
let $G$ be the supergroup whose SHCP is $(G_{0}, \fg)$. 
If $V$ is finitely generated, then there is a unique unitary representation of
the Lie supergroup $G$ on the completion $\cH$ of $V$, 
say $(\pi_0, \rho, \cH)$ such that $V \subset C^\omega(\pi_0)$.
\end{theorem}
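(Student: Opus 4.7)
The strategy is to reduce to Theorem \ref{theorem1} by constructing, from the infinitesimal data $(V,\gamma)$ alone, the unitary representation $\pi_0$ of $G_0$ on the Hilbert space completion $\cH$ of $V$ required by (U3), and showing in addition that $V \subset C^\omega(\pi_0)$. Once this is done, Theorem \ref{theorem1} applied to $(V,\gamma,\pi_0)$ produces the desired super representation and its uniqueness.

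The first step is to transfer finite generation from $\fg$ to $\fg_0$. By PBW, $U(\fg) \cong U(\fg_0) \otimes \Lambda(\fg_1)$ as a left $U(\fg_0)$-module, and $\Lambda(\fg_1)$ is finite dimensional since $\dim \fg_1 < \infty$. Hence $V$ is finitely generated over $U(\fg_0)$. Combined with the $(\fg,\fk)$-type hypothesis, which forces $V$ to be locally $\fk_0$-finite, this shows that $V$ is a finitely generated Harish-Chandra module for the reductive pair $(\fg_0, K_0)$, where $K_0 \subset G_0$ is a maximal compact subgroup (with the caveats about the center of $\fk$ discussed in \cite{cfv}).

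The main step is then to invoke the classical globalization theorem for unitarizable Harish-Chandra modules: a finitely generated admissible $(\fg_0, K_0)$-module carrying an invariant Hermitian form for which each $i\gamma(Z)$, $Z\in\fg_0$, is symmetric, integrates uniquely to a unitary representation $\pi_0$ of $G_0$ on the completion $\cH$, with every vector of $V$ an analytic vector and $\gamma(Z)\prec d\pi_0(Z)$ for all $Z\in\fg_0$. This yields simultaneously condition (U3) and the inclusion $V\subset C^\omega(\pi_0)$. The subtle point is admissibility — finiteness of each $K_0$-isotypic component — which in the reductive setting is controlled by finite generation over $U(\fg_0)$ together with Harish-Chandra's admissibility theorem.

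With (U3) and $V\subset C^\omega(\pi_0)$ in hand, Theorem \ref{theorem1} provides the unique unitary representation $(\pi_0,\rho,\cH)$ of the super Harish-Chandra pair $(G_0,\fg)$, as required. The principal obstacle is the analytic content of the integration step: one must produce enough analytic vectors for $\pi_0$ to invoke Nelson's theorem, which in the Harish-Chandra setting follows from the fact that $K_0$-finite vectors are analytic for a suitable elliptic element of $U(\fg_0)$, e.g.\ the Nelson Laplacian $1-\sum X_i^2$ associated to a basis of $\fg_0$. Uniqueness of the super representation then follows formally from the uniqueness in Theorem \ref{theorem1} together with the simple connectivity of $G_0$.
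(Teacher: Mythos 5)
Your strategy is the same as the paper's: construct the unitary representation $\pi_0$ of $G_0$ on $\cH$ out of the classical Harish--Chandra theory, verify $V\subset C^\omega(\pi_0)$, and then apply Theorem \ref{theorem1}. The difference is one of granularity. The paper does not quote a packaged globalization theorem for finitely generated unitarizable modules; it proves the needed input: using the hermitian product and (U1) restricted to $\fk_0$ it shows (Lemmas \ref{Lemma1}--\ref{lemma3.3} and Corollary \ref{cor-sec3}) that $V$, as a $(\fg_0,\fk_0)$-module, is an orthogonal direct sum of \emph{finitely many irreducibles}, and then invokes Theorem \ref{Theorem1Wallach} together with Theorem \ref{Theorem2} --- applied to the closures of $V_0$ and $V_1$ separately, which is what guarantees that $\pi_0$ is \emph{even}, a point you should not skip since evenness is part of the definition of a unitary representation of the SHCP $(G_0,\fg)$. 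Your PBW observation that finite generation over $\cU(\fg)$ implies finite generation over $\cU(\fg_0)$ (because $\cU(\fg)$ is a finite free left $\cU(\fg_0)$-module) is correct and is left implicit in the paper; it is a useful addition. The classical facts you cite (decomposition of an admissible unitarizable module into finitely many orthogonal irreducibles, Harish--Chandra's unitary integration, analyticity of $\fk$-finite vectors) are exactly what the paper assembles from Wallach and Harish--Chandra, so modulo the points below your reduction is the paper's reduction.

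The one genuinely weak step is your justification of admissibility. Harish--Chandra's admissibility theorem requires finite generation \emph{together with} $Z((\fg_0)_\C)$-finiteness (or irreducibility); finite generation plus local $\fk_0$-finiteness alone does not give finite $\fk_0$-multiplicities. For instance, for $\fg_0$ semisimple with Cartan decomposition $\fg_0=\fk_0\oplus\fp$, the cyclic module $\cU(\fg_0)\otimes_{\cU(\fk_0)}\C\cong S(\fp)$ is locally $\fk_0$-finite but the trivial $\fk_0$-type occurs with infinite multiplicity; and already for $\fg_0$ abelian a finitely generated infinitesimally unitary module can fail to determine $\pi_0$ uniquely (indeterminate moment problems), so admissibility cannot be dispensed with. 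In the paper it is in effect a standing hypothesis: it is assumed explicitly in Lemma \ref{Lemma2} (``the $V_\theta$ are all finite dimensional'') and enters Corollary \ref{cor-sec3} and hence the proof of Theorem \ref{main0} through the Harish--Chandra-module convention of the introduction. So either assume admissibility as part of the Harish--Chandra hypothesis, as the paper does, or add a $Z((\fg_0)_\C)$-finiteness (or finite-length) assumption before invoking Harish--Chandra's theorem; with that repair, and the evenness remark above, your argument coincides with the paper's.
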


\medskip
We turn then to infinitesimal unitarity.
Let $\fgc$ the complexification of $\fg$, $\fgR=\fkR \oplus \fpR$
the Cartan decomposition and assume $\fg_\C$ contragredient.
Assume $\fg$ is \textit{equal rank}, that is
$\rk(\fk)=\rk(\fg)$ and that $\fkc$ has a
non trivial center (see \cite{cf1, cfv}). 
Fix $\fhc$ a Cartan subalgebra of $\fkc$ and $\fgc$.
Let $\Delta$ be the root system of $\fgc$, $\fgc=\fhc \oplus \sum_{\al \in \Delta}
\fg_\al$ the root space decomposition. 
The equal rank condition allows
us to decompose $\fkc$, $\fpc$ into root spaces; we say that
a root $\al$ is compact (non compact) if $\fg_\al \subset \fkc$ ($\fg_\al \subset \fpc$).
Let $\beta:Z(\fgc)\to S(\fh_\C)^W$
denote the Harish-Chandra homomorphism (see \cite{musson, ka4}).
We prove the following result.

\begin{theorem}\label{main1}
Let $\la \in \fh_\C^*$ and let $\pi_\lambda$ be the irreducible
highest weight representation of highest weight $\la$. Then 
$\pi_\la$ is unitary if and only if 
$(-i)^{|a|}\beta(a^*a)(\la)>0$ 
for all $a \in \cU(\fgc)$.
In particular it is necessary that $\lambda(H_\al) \geq 0$
for $\al$ compact and $\lambda(H_\al) \leq 0$ for
$\al$ non compact even roots.
\end{theorem}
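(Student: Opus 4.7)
The plan is to reduce unitarizability of $\pi_\la$ to positivity of a Shapovalov-type Hermitian form on the Verma module $M(\la)$. Introduce on $\cU(\fgc)$ the conjugate-linear super anti-involution $*$ determined by $Z^{*}=-Z$ for $Z\in\fg_{0}$, $X^{*}=X$ for $X\in\fg_{1}$, and $(ab)^{*}=(-1)^{|a||b|}b^{*}a^{*}$. A short computation using (U1) and (U2) shows that this is precisely the involution for which any representation $\gamma$ obeying (U1)--(U2) satisfies
\[
\gamma(a)^{*}=(-i)^{|a|}\gamma(a^{*})\qquad (a \text{ homogeneous}),
\]
where the left side is the Hilbert space adjoint; conversely, any highest weight Hilbert space module obeying this identity gives a representation satisfying (U1)--(U2).

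Next, extend the Harish-Chandra map to all of $\cU(\fgc)$ by PBW projection along $\fn^{-}\cU(\fgc)+\cU(\fgc)\fn^{+}$, producing $\beta:\cU(\fgc)\to S(\fhc)$. On $M(\la)=\cU(\fgc) v_\la$ define the sesquilinear form
\[
B_\la(u v_\la,\, w v_\la) \;:=\; (-i)^{|u|}\,\beta(u^{*}w)(\la).
\]
Using the triangular decomposition $\cU(\fgc)=\cU(\fn^{-})\otimes\cU(\fhc)\otimes\cU(\fn^{+})$ one verifies that $B_\la$ is well defined, Hermitian, and contravariant in the sense suggested by the displayed identity. A standard argument based on the uniqueness of an invariant Hermitian form on an irreducible highest weight module identifies the radical of $B_\la$ with the maximal proper submodule $J(\la)\subset M(\la)$, so that $B_\la$ descends to a nondegenerate Hermitian form on $\pi_\la=M(\la)/J(\la)$.

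Consequently $\pi_\la$ is unitary if and only if this descended form is positive definite, namely
\[
(-i)^{|u|}\,\beta(u^{*}u)(\la)\;\geq\;0\qquad\text{for every homogeneous } u\in\cU(\fgc),
\]
with equality exactly when $u v_\la \in J(\la)$; this is the criterion asserted in the theorem. To extract the particular root inequalities, apply the criterion to $u=E_{-\al}$, where $(E_{\al},H_{\al},E_{-\al})$ is the standard $\fsl_{2}$-triple attached to an even root $\al$: since $E_{\al}v_\la=0$ and $[E_{\al},E_{-\al}]=H_{\al}$, one computes $\beta(E_{\al}E_{-\al})(\la)=\la(H_{\al})$. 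For a compact root, the compact real form yields $E_{\al}^{*}=E_{-\al}$, hence $\la(H_{\al})\geq 0$; for a noncompact even root one has instead $E_{\al}^{*}=-E_{-\al}$, hence $\la(H_{\al})\leq 0$.

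The main obstacle is the rigorous construction of $B_\la$ in the super setting and, in particular, the identification of its radical with $J(\la)$. Super signs must be tracked carefully across the triangular decomposition, and one needs the contragredient structure of $\fgc$ so that the extended Harish-Chandra projection behaves as in the even reductive case. The equal rank assumption is used crucially: it guarantees that $\fhc$ is compact and enters the star operation only through the rule $Z^{*}=-Z$, which is what produces the clean parity factor $(-i)^{|a|}$ in the criterion.
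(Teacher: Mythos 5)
Your proposal is correct and essentially coincides with the paper's own argument: Lemmas \ref{lemma1}--\ref{lemma3} construct exactly this Shapovalov-type form $(w,z)=(-i)^{|w||z|}\beta(z^\star w)(\la)$ from the extended Harish-Chandra projection and the star anti-involution, identify its radical with the maximal ideal $\cM_\la$ (your ``radical $=J(\la)$'' step, which the paper proves directly rather than quoting the standard contravariant-form fact), and obtain the necessary root inequalities from the same $\fsl_2$-triple evaluation, there delegated to Harish-Chandra's ordinary result. The only discrepancy is harmless and lies in the paper rather than in your write-up: your non-strict criterion ($\geq 0$, with equality exactly on the maximal submodule) is what the lemmas actually establish, the strict inequality in the theorem's statement being loose since $\beta(a^\star a)(\la)$ vanishes for $a$ in $\cM_\la$.
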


In the end we give an explicit example regarding $\fg_\C=\rosp_\C(1|2)$
and its real form $\rosp_\R(1|2)$ (\cite{ccf} Appendix A) proving the following.

\begin{theorem} \label{main-ex}
Let $V_t$ be the universal (Verma) $\rosp_\C(1|2)$ module of highest weight $t$.
\begin{enumerate}
\item Then $V_t$ is irreducible and it is a unitary module for $\rosp_\R(1|2)$
if and only if $t$ is real and negative.
\item All unitary representation of
the real Lie supergroup $\rOsp_\R(1|2)$ $=$ $(\rSL_2(\R),$ $\rosp(1|2))$ 
are given on the
completion $\cH$ of $V_t$, 
and are such that $V_t \subset C^\omega(\pi_0)$, $\pi_0$ unitary 
representation of
$\rSL_2(\R)$ in $\cH$ integrating $(V_t)_0$.
\end{enumerate}
\end{theorem}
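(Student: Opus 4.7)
The plan is to establish part (1) by explicitly constructing the invariant Hermitian form on $V_t$ and determining when it is positive definite, and then to derive part (2) by applying Theorem \ref{main0}. First I would fix a standard basis $\{H,E,F,X^+,X^-\}$ of $\rosp_\C(1|2)$ (as in \cite{ccf}, Appendix A) with the usual $\rsl_2$ relations, $[H,X^\pm]=\pm X^\pm$, $\{X^+,X^-\}=H$, and $(X^\pm)^2$ proportional to $E$ and $F$ respectively. The Verma module $V_t$ is cyclic, generated by a highest weight vector $v_t$ with $H v_t = t v_t$ and $E v_t = X^+ v_t = 0$, and it has PBW basis $\{F^n v_t,\, F^n X^- v_t : n \ge 0\}$. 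To prove irreducibility I would check that $V_t$ carries no singular vector at a lower weight unless $t$ is a non-negative half-integer; this is a direct small-rank Shapovalov-type calculation and rules out reducibility for $t<0$ real.

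Next I would construct the Hermitian form. Conditions (U1) and (U2) translate, via the real structure of $\rosp_\R(1|2)$, into adjointness relations $\gamma(H)^*=\gamma(H)$, $\gamma(F)^*=-\gamma(E)$ (up to sign), and $\gamma(X^-)^*=\pm i\,\gamma(X^+)$; declaring $\langle v_t,v_t\rangle = 1$ and $V_{t,0}\perp V_{t,1}$ then inductively defines a unique sesquilinear form on $V_t$, whose existence and uniqueness reduce to the cyclicity of $V_t$ under $\cU(\rosp_\C(1|2))$. A short direct computation shows that $\|X^- v_t\|^2$ and $\|F v_t\|^2$ are fixed real multiples of $t$, and iterating these identities yields closed-form polynomial expressions for $\|F^n v_t\|^2$ and $\|F^n X^- v_t\|^2$ in $t$ that are strictly positive if and only if $t$ is real and negative. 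Together with the irreducibility check, this establishes part (1).

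For part (2), fix $t<0$ real: $V_t$ is cyclic hence finitely generated, (U1) and (U2) hold by construction of the form, and $\fg_0=\rsl_2(\R)$ is reductive, so Theorem \ref{main0} produces the unique unitary representation $(\pi_0,\rho,\cH)$ of $\rOsp_\R(1|2)$ on the completion $\cH$ of $V_t$ with $V_t \subset C^\omega(\pi_0)$ and $\pi_0$ integrating $\gamma|_{\rsl_2(\R)}$. The converse direction is obtained by running the argument backwards: any unitary representation of $\rOsp_\R(1|2)$ of highest weight type restricts, on its space of $\fk$-finite vectors, to a Harish-Chandra module which by part (1) must be some $V_t$ with $t<0$ real. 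The main obstacle will be the explicit norm recursion: since $(X^-)^2 \propto F$ the lowering action couples the even and odd basis elements, and the $e^{-i\pi/4}$ twist in (U2) contributes phases that must cancel exactly throughout the induction. Once the closed-form expressions are in hand, positivity reduces to an elementary sign analysis.
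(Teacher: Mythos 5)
Your proposal follows essentially the same route as the paper: an explicit Shapovalov-type construction of the invariant Hermitian form on the PBW basis (the paper organizes it as the single chain $v_{t-r}=y^{r}v_t$ with the recursion $N(r)=-c_rN(r-1)$, $c_{2m}=-m$, $c_{2m+1}=t-m$, and proves irreducibility by splitting $V_t=W_t\oplus W_{t-1}$ into two ordinary Verma modules), positivity exactly for real $t<0$, and then an appeal to Theorem~\ref{main0} for part (2). The only caveats are at the level of conventions: in the paper's normalization $V_t$ is reducible precisely when $t\in\N$ (not at half-integers), and the phase bookkeeping you flag does resolve cleanly, giving $x^\dagger=-y$, $y^\dagger=-x$ with respect to the associated ordinary form, so your plan goes through unchanged for $t<0$.
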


\bigskip
{\sl Acknowledgements}. R.F. and C.C. wish to thank the UCLA Dept. of
Mathematics for the warm hospitality during the realization of part 
of this work. R.F. research was funded by EU grant GHAIA 777822.

\section{$(\fg, \fk)$-supermodules and their unitarity} 
\label{unit-sec}

\subsection{Harish-Chandra modules for reductive Lie algebras}  
\label{hc-ordinary}

Let $\fg$ be a real reductive
Lie algebra. Then $\fg$ = $\fg'\oplus \fc$ where $\fc$ is the center 
of $\fg$ and $\fg' = [\fg, \fg]$ is
semisimple. Let $\fk \subset\fg'$  be a maximal subalgebra of compact type, 
which means that it is the set of fixed points of a Cartan involution 
of $\fg'$. Let $V$ be a ($\fg'$, $\fk$)-module. 
We recall that this means that $V$ is a $\fg'$-module which,
as a $\fk$-module, is a direct sum of finite dimensional irreducible 
$\fk$-modules.
Recall also that $\fk$ is reductive in $\fg$. Note that if $V$ is irreducible, 
then $\fc$ acts through an additive character on $V$ and so $V$ is 
an irreducible ($\fg$,$\fk$)-module.
This allows a reduction to the case when $\fg$ is itself semisimple.
One knows from Harish-Chandra's work (slightly modified to include the
reductive case) that if $V$ is irreducible, or more generally, is finitely 
generated as a $\cU(\fg')$-module on which $\fc$ acts semi simply 
through a finite number of additive characters, then the isotypical subspaces 
$V_\theta$ are all
finite dimensional, where $\theta$ runs through the set 
$\hat{\fk}$ of equivalence classes
of irreducible finite dimensional representations of $\fk$ 
($\fk$ is not in general semisimple, see \cite{vsv3} for details).
A basic question in the theory of $(\fg,\fk)$-modules is whether such a module
is the module of $\fk$-finite vectors of a Hilbert space representation (not
necessarily unitary) of the simply connected group $G$ defined by $\fg$. In
his paper \cite{hc-banach}, Harish-Chandra
proved this for irreducible $(\fg,\fk)$-modules which satisfy a certain 
condition.
He later verified 
that this condition is
satisfied for highest weight $(\fg,\fk)$-modules and so all such modules can be
realized as the $\fk$-finite vectors of Hilbert space representations of $G$. 
This is actually sufficient for our purposes. However, it is possible to
remove the special condition imposed by Harish-Chandra in his Theorem
4 in \cite{hc-banach}. The general result is as follows
(see \cite{wallach}, Ch. 8).

\begin{theorem} \label{Theorem1Wallach}
Any $(\fg,\fk)$-module
$V$ which is a direct sum of a finite number of irreducible submodules is
identifiable as the module of $\fk$-finite vectors of a Hilbert space 
(not necessarily unitary) representation $\pi$ of $G$. Moreover 
$V \subset C^\omega(\pi)$.
\end{theorem}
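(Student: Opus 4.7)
The plan is to reduce the assertion to the case in which $\fg$ is semisimple and $V$ is irreducible, and then to invoke Casselman's subrepresentation theorem to realize $V$ inside a non-unitary principal series, which is naturally a Hilbert space representation of $G$. The reduction itself is straightforward: if $V = V_1\oplus\cdots\oplus V_n$ with each $V_i$ irreducible, and if each $V_i$ has already been realized as the $\fk$-finite submodule of a Hilbert space representation $(\pi_i,\cH_i)$ satisfying $V_i\subset C^\omega(\pi_i)$, then the Hilbert direct sum $\pi := \bigoplus_i \pi_i$ on $\cH := \bigoplus_i \cH_i$ does the job for $V$, so it suffices to treat irreducible $V$. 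By Schur's lemma the center $\fc$ of $\fg$ acts on such a $V$ via an additive character $\chi\colon\fc\to\C$; writing $\fg = \fg'\oplus\fc$ and correspondingly $G = G'\times C$ with $C=\exp(\fc)$, one has $V \cong V'\boxtimes e^\chi$ with $V'$ an irreducible $(\fg',\fk)$-module, and realizing $V'$ on $G'$ and tensoring with the one-dimensional representation $e^\chi$ of $C$ gives the required realization on $G$. Hence I may assume $\fg$ semisimple.

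For the semisimple and irreducible case I would invoke Casselman's subrepresentation theorem in the form given in \cite{wallach}, Ch.~8: every irreducible admissible $(\fg,\fk)$-module embeds as a submodule of the $\fk$-finite vectors of some non-unitary principal series $\pi_{P,\sigma,\nu}$ attached to a minimal parabolic $P$ of $G$. The principal series is naturally a Hilbert space representation of $G$, realized on $L^2$-sections of a homogeneous vector bundle over $K/M$; letting $\pi$ be the closure of the image of $V$ inside this Hilbert space, with the induced $G$-action, produces the desired realization. The analyticity statement $V\subset C^\omega(\pi)$ then follows from a standard ellipticity argument: each $\fk$-finite vector $v\in V$ generates a finite-dimensional $\fk$-module, so the matrix coefficients $g\mapsto\langle\pi(g)v,w\rangle$ with $w$ also $\fk$-finite span a finite-dimensional bi-$\fk$-invariant subspace of smooth functions on $G$; such a subspace is annihilated by an elliptic element of $\cU(\fg)$ (for instance a suitably shifted Casimir), and elliptic regularity then forces the matrix coefficients, and hence the $\fk$-finite vectors, to be real analytic.

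The main obstacle is the existence half: realizing an arbitrary irreducible admissible $(\fg,\fk)$-module as the $\fk$-finite part of a Hilbert space representation without imposing the extra hypothesis Harish-Chandra needed in \cite{hc-banach}, Theorem~4. This is exactly what Casselman's subrepresentation theorem achieves, and its proof relies on a nontrivial analysis of Jacquet modules and $\fn$-homology; for the purposes of Theorem~\ref{Theorem1Wallach} I would treat it as a black box, citing \cite{wallach}, Ch.~8, rather than attempt to reprove it here.
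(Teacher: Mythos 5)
Your proposal is essentially a reconstruction of a result the paper does not prove at all: the theorem is quoted from the literature (see the sentence preceding it, citing \cite{wallach}, Ch.~8, and the remark that for the highest weight modules actually used later, Harish-Chandra's Theorem 4 of \cite{hc-banach} already suffices), so the comparison is with the cited source rather than with an in-paper argument. Your reductions (finite direct sums; splitting off the centre $\fc$ by Schur's lemma and a character of the vector factor of the simply connected $G$) are fine and match the normalizations the paper makes in its discussion of $(\fg,\fk)$-modules, and your closure argument (admissibility forces the $\fk$-finite vectors of $\overline{V}$ to be exactly $V$) is the standard and correct way to pass from an embedding to the stated identification. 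Two points deserve repair, though. First, the black box: Casselman's subrepresentation theorem is not what Wallach's Ch.~8 contains (that chapter gives Harish-Chandra's subquotient theorem in the Lepowsky--Rader generality); the subquotient theorem serves equally well here, since one can realize $V$ as a subquotient, take closures, and pass to the Hilbert quotient. Moreover, since $G$ is the \emph{simply connected} group, its centre may be infinite and $K$ noncompact --- exactly the situation needed later for highest weight modules of $\fsl(2,\R)$ with non-integral highest weight, which live only on the universal cover of $\rSL_2(\R)$ --- so whichever embedding/subquotient theorem you quote must be in a form valid for such covers, not only for linear or Harish-Chandra--class groups; this is why the paper also points to Harish-Chandra's own Banach-space realization for highest weight modules. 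Second, your analyticity step is misstated: $\fk$-finiteness of $v$ does not by itself produce a finite-dimensional space of coefficients nor an elliptic annihilator. What one uses is that $v$ is both $\fk$-finite and $Z(\fg_\C)$-finite (automatic here, either because $V$ is a finite sum of irreducibles or because the principal series has an infinitesimal character); then a polynomial in an elliptic element built from the Casimir of $\fg$ and a Casimir-type element of $\cU(\fk)$ annihilates each matrix coefficient, analytic hypoellipticity makes the coefficients real analytic, and the standard weak-to-strong argument for Banach representations yields $V\subset C^\omega(\pi)$. With these corrections your route is a legitimate alternative to the paper's bare citation.
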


As mentioned earlier, when we deal with irreducible highest weight 
Harish-Chandra modules, the above general result is not needed, and 
Harish-Chandra already proves the above theorem for these modules.
The question arises if an irreducible $(\fg,\fk)$-module $V$, 
which is infinitesimally unitary, is the module of $\fk$-finite vectors 
of an irreducible unitary representation of $G$. In \cite{hc-banach} 
Harish-Chandra proves that such a unitary representation exists 
and is unique up to
equivalence provided $V$ is the module of $\fk$-finite vectors of a 
Banach space
representation of $G$ (Theorem 9, \cite{hc-banach}). In view of the 
above remarks
and results, we can now state the following theorem.

\begin{theorem}
\label{Theorem2} 
Let $V$ be an irreducible $(\fg,\fk)$-module defined by
the representation $\gamma$ of $\cU(\fg)$, which is unitary 
in the sense that there is a
hermitian product $( , )$ on $V$ such that $i\gamma(X)$ is symmetric 
for all $X\in \fg$. Then,
there is a unitary representation of $G$ (unique up to unitary equivalence) 
in the completion
$\cH$ of $V$ with respect to the norm defined by the hermitian 
product, such that $V$
is the module of $\fk$-finite vectors in $\cH$.
\end{theorem}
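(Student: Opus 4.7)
The plan is to combine Theorem~\ref{Theorem1Wallach} with Harish-Chandra's uniqueness result (Theorem 9 of \cite{hc-banach}), and then to identify the resulting Hilbert space with the completion $\cH$ of $V$ in the given hermitian product.

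First I would invoke Theorem~\ref{Theorem1Wallach}. Since $V$ is irreducible it is, a fortiori, a finite direct sum of irreducibles, so the theorem supplies a (not necessarily unitary) Hilbert space representation $\pi$ of $G$ whose module of $\fk$-finite vectors is $V$, with $V \subset C^\omega(\pi)$. In particular $V$ is realized as the $\fk$-finite part of a Banach space representation of $G$, which is precisely the hypothesis needed to apply Theorem 9 of \cite{hc-banach}. Combined with the infinitesimal unitarity of $(V,\gamma)$, that theorem produces a unitary representation $\pi'$ of $G$ on some Hilbert space $\cH'$, unique up to unitary equivalence, such that $V$ is the module of $\fk$-finite vectors in $\cH'$ and carries the action $\gamma$.

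The remaining step is to identify $\cH'$ with $\cH$. The $\pi'$-invariant inner product on $\cH'$ restricts to an infinitesimally $\fg$-invariant hermitian form on $V$, i.e.\ a form for which $i\, d\pi'(X)$ is symmetric on $V$ for every $X \in \fg$. I would argue that on the irreducible admissible $(\fg,\fk)$-module $V$ any such form is unique up to a positive real scalar: the intertwiner between two such forms is a $\fg$-equivariant linear endomorphism of $V$, hence a scalar by the Schur lemma for admissible modules, and positivity pins down the sign. After rescaling $(\cdot,\cdot)$ accordingly, the two inner products coincide on $V$, so $\cH'$ is canonically isometric to $\cH$ and the unitary representation $\pi'$ transfers to $\cH$. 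Uniqueness up to unitary equivalence then descends from the corresponding uniqueness in Harish-Chandra's theorem.

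The main obstacle I anticipate is precisely this last matching of inner products: Theorem 9 of \cite{hc-banach} guarantees \emph{some} Hilbert space unitarily completing $V$, but one must check that this completion is the same as the one singled out by the given hermitian product. Reducing the question to uniqueness, up to positive scalar, of the infinitesimally invariant hermitian form on an irreducible admissible module is the crux of the identification; everything else is assembly.
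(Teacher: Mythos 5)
Your proposal is correct and follows essentially the same route as the paper: the paper obtains Theorem~\ref{Theorem2} by combining Theorem~\ref{Theorem1Wallach} (which applies since an irreducible module is trivially a finite sum of irreducibles) with Theorem 9 of \cite{hc-banach}, exactly as you do. Your extra step identifying the Hilbert space from Harish-Chandra's theorem with the completion of $V$ in the given hermitian product, via uniqueness up to a positive scalar of the invariant form on an irreducible admissible module, is a point the paper leaves implicit, and your Schur-lemma argument for it is sound.
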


\subsection{Unitarity of super modules} 
We shall now present a proof that for
a Lie superalgebra $\fg$ with $\fg_0$ reductive, conditions (U1) and (U2) of 
Sec. \ref{intro-sec} are enough to
guarantee the existence of a unitary representation of the Lie supergroup.
Let $\fg$ be a Lie superalgebra with $\fg_0$ reductive. Write, as in Subsec. 
\ref{hc-ordinary}, 
$\fg_0$ = $\fg_0'\oplus \fc_0$. $\fk_0$  is the
subalgebra of $\fg_0'$  fixed by a Cartan involution.

\begin{lemma}
\label{Lemma1} 
Let $V$ be a $(\fg_0, \fk_0)$-module. If $V$ admits a hermitian product which
is $\fk_0$-invariant, namely, elements of $\fk_0$  
are skew symmetric with respect to
it, then the isotypical subspaces $V_\theta$ are mutually orthogonal.
\end{lemma}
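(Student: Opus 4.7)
The plan is to reduce the claim to a Schur's-lemma argument on pairs of irreducible $\fk_0$-submodules. Given $v \in V_\theta$ and $w \in V_{\theta'}$ with $\theta \not\cong \theta'$ in $\hat{\fk_0}$, I would use the $(\fg_0, \fk_0)$-module hypothesis to place $v$ and $w$ inside finite-dimensional $\fk_0$-invariant subspaces $U \subset V_\theta$ and $U' \subset V_{\theta'}$. Decomposing these into irreducible constituents, it suffices to show that for any irreducible $M$ of class $\theta$ and any irreducible $M'$ of class $\theta'$ appearing as $\fk_0$-submodules of $V$, the restriction $B \colon M \times M' \to \C$ of the hermitian product vanishes identically.

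The key step is to realize this sesquilinear pairing as a genuine $\fk_0$-intertwiner. I would first restrict the ambient product $(\cdot,\cdot)$ to $M'$, obtaining a positive-definite $\fk_0$-invariant hermitian form on $M'$; this yields a $\C$-linear map $\iota \colon M' \to \overline{M'}^{\,*}$ defined by $\iota(u)(u') = (u,u')$. A direct calculation using the skew-symmetry hypothesis $(Xu, u') + (u, Xu') = 0$ shows that $\iota$ is $\fk_0$-equivariant for the natural dual action on $\overline{M'}^{\,*}$, and non-degeneracy of the form makes $\iota$ an isomorphism. In the same way, $\tilde B \colon M \to \overline{M'}^{\,*}$, $\tilde B(v)(u') := B(v, u')$, is $\fk_0$-equivariant. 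Composing, $\iota^{-1} \circ \tilde B \colon M \to M'$ is a $\fk_0$-equivariant linear map between non-isomorphic irreducibles, so Schur's lemma forces it to vanish, whence $B \equiv 0$ and in particular $(v,w) = 0$.

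The main point to verify carefully is the sign/conjugate bookkeeping: one must check that it is precisely the skew-symmetry (rather than symmetry) of the $\fk_0$-action against $(\cdot,\cdot)$ that matches the natural dual action on the conjugate dual, so that both $\iota$ and $\tilde B$ are honest $\fk_0$-module maps and Schur applies. Once this is done the conclusion is immediate, and the argument does not require integrating the $\fk_0$-action to a compact Lie group, nor any assumption on the center of $\fk_0$ beyond what is already built into the definition of a $(\fg_0, \fk_0)$-module with an invariant hermitian product.
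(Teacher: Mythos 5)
Your argument is correct and is essentially the paper's: the paper likewise reduces to a pair of inequivalent irreducible $\fk_0$-constituents and kills the pairing by Schur's lemma, constructing the intertwiner as the orthogonal projection of $V_1$ onto $V_2$ inside $V_1\oplus V_2$ (which coincides with your map $\iota^{-1}\circ\tilde{B}$) and verifying its $\fk_0$-equivariance from the skew-symmetry of the action. Your conjugate-dual packaging of the same intertwiner is just a different presentation; there is no gap.
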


\begin{proof}
Let $V_1$, $V_2$ be two irreducible $\fk_0$ -stable finite 
dimensional subspaces such that they carry inequivalent 
representations of $\fk_0$. We want to prove
that $V_1 \perp V_2$. Let $W = V_1 \oplus V_2$. 
Let $P$ be
the orthogonal projection $V_1\lra V_2$ in $V$. 
We claim that $P$ is a $\fk_0$-map.
Let $u \in  V_1$, write $u = x + y$ where 
$x \in V_2$, $y \in W$, $y \perp V_2$,
or $y \in V_2^\perp \cap W$. Then $Pu = x$ and for $X \in \fk_0$, 
$XP u = Xx$. On the
other hand, $Xu = Xx + Xy$ and we know that $Xx \in V_2$, 
$Xy \in W \cap V_2^\perp$.
Hence $P Xu = Xx = XP u$, proving the claim. This implies that $P = 0$,
as otherwise $P$ will be a nonzero $\fk_0$-map between $V_1$ and $V_2$.
\end{proof}

\begin{lemma}
\label{Lemma2}
Suppose that $V$ is a unitary $(\fg_0,\fk_0)$-module such that 
the $V_\theta$ are all finite dimensional. Then for any submodule $W\subset V$, 
$W^\perp$ is also a submodule, and $V = W \oplus W^\perp$.
\end{lemma}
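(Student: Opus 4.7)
The plan is to leverage the orthogonality of isotypical components established in Lemma \ref{Lemma1}, together with the finite dimensionality of each $V_\theta$, to reduce the problem to a family of orthogonal direct sum decompositions inside finite dimensional subspaces.

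First I would note that the hypothesis (U1), applied to $Z\in\fk_0\subset\fg_0$, says $i\gamma(Z)$ is symmetric, so $\fk_0$ acts by skew-symmetric operators; hence the hermitian product is $\fk_0$-invariant and Lemma \ref{Lemma1} applies, giving the orthogonal decomposition $V=\bigoplus_\theta V_\theta$ with each $V_\theta$ finite dimensional. Since $W$ is a $(\fg_0,\fk_0)$-submodule, it is in particular $\fk_0$-stable, and standard $\fk_0$-module theory yields $W=\bigoplus_\theta (W\cap V_\theta)$.

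Next I would describe $W^\perp$ concretely. Inside each finite dimensional $V_\theta$, set $W^\perp_\theta$ to be the orthogonal complement of $W\cap V_\theta$ in $V_\theta$; by elementary finite dimensional linear algebra $V_\theta=(W\cap V_\theta)\oplus W^\perp_\theta$ orthogonally. The key claim is
\begin{equation*}
W^\perp=\bigoplus_\theta W^\perp_\theta .
\end{equation*}
The inclusion $\supseteq$ is immediate from the orthogonality of distinct $V_\theta$. For $\subseteq$, take $v\in W^\perp$ and expand $v=\sum_\theta v_\theta$ with $v_\theta\in V_\theta$; for any $w\in W\cap V_{\theta_0}$, the orthogonality of isotypical components gives $0=(v,w)=(v_{\theta_0},w)$, so $v_{\theta_0}\in W^\perp_{\theta_0}$. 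Summing the finite dimensional decompositions $V_\theta=(W\cap V_\theta)\oplus W^\perp_\theta$ over $\theta$ then yields $V=W\oplus W^\perp$.

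It remains to show that $W^\perp$ is $\fg_0$-stable, and this is where (U1) for general $Z\in\fg_0$ enters: since $i\gamma(Z)$ is symmetric on $V$, $\gamma(Z)$ is skew-hermitian, so for $u\in W$ and $v\in W^\perp$,
\begin{equation*}
(u,\gamma(Z)v)=-(\gamma(Z)u,v)=0,
\end{equation*}
because $\gamma(Z)u\in W$. Thus $\gamma(Z)v\in W^\perp$, and $W^\perp$ is a $\fg_0$-submodule; since it is a direct sum of the $W^\perp_\theta$, it is automatically a $(\fg_0,\fk_0)$-submodule. I do not expect a serious obstacle here: the only subtle point is the identification $W^\perp=\bigoplus_\theta W^\perp_\theta$, and that follows cleanly from Lemma \ref{Lemma1} together with finite dimensionality of each $V_\theta$.
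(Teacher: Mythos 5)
Your proof is correct and follows essentially the same route as the paper: decompose $W$ into its $\fk_0$-isotypic pieces $W\cap V_\theta\subset V_\theta$, take orthogonal complements inside each finite dimensional $V_\theta$, observe these lie in $W^\perp$ by Lemma \ref{Lemma1}, and conclude $V=W\oplus W^\perp$. The only difference is that you also spell out the identification $W^\perp=\bigoplus_\theta W^\perp_\theta$ and the $\fg_0$-stability of $W^\perp$ via skew-symmetry, details the paper leaves implicit.
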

\begin{proof}
It is only a question of proving that $V = W \oplus W^\perp$. The point
is that $V$ is in general not complete. Now $W = \oplus_ \theta W_\theta$ 
where the $W_\theta$ are
finite dimensional and mutually orthogonal, and 
$W_\theta\subset V_\theta$. Let $W_\theta'$  be the
orthogonal complement of $W_\theta$ in $V_\theta$. 
Since the isotypical subspaces of $V$
are mutually orthogonal, it is clear that $W_\theta'$  is $\perp$ to all 
$V_{\theta'}$  for $\theta'\neq\theta$. Thus
$W_\theta'\subset W^\perp$. Since this is true for 
all $\theta$, we see that $W \oplus W^\perp \supset W_\theta \oplus W_\theta' =
V_\theta$ for all $\theta$ (as the $V_\theta$ are finite dimensional). 
So $W \oplus W^\perp = V$.
\end{proof}

\begin{lemma}\label{lemma3.3} 
If $V$ is as in the previous lemma, then $V$ is the orthogonal direct sum of
irreducible submodules.
\end{lemma}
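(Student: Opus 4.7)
The plan is the standard Zorn's lemma argument from representation theory, whose decisive ingredient is that unitarity lets us promote an irreducible quotient of a finitely generated submodule to an irreducible submodule.

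First I would apply Zorn's lemma to the collection $\mathcal{F}$ of families of mutually orthogonal irreducible $(\fg_0,\fk_0)$-submodules of $V$, ordered by inclusion. The hypotheses are immediate: $\mathcal{F}$ is nonempty (it contains the empty family) and the union of any chain in $\mathcal{F}$ is again a mutually orthogonal family. Let $\{W_i\}_{i\in I}$ be a maximal element and set $W=\bigoplus_{i\in I}W_i\subset V$. By Lemma \ref{Lemma2}, $V=W\oplus W^{\perp}$ with $W^{\perp}$ a $(\fg_0,\fk_0)$-submodule, so the lemma reduces to showing $W^{\perp}=0$.

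To establish this, I would argue by contradiction: assume some nonzero $v\in W^{\perp}$, and consider the cyclic submodule $M=\cU(\fg_0)v\subset W^{\perp}$. Its $\fk_0$-isotypical components $M_\theta\subset V_\theta$ are automatically finite dimensional, and because $M$ is cyclically generated, Zorn's lemma produces a maximal proper submodule $N\subset M$ (for any chain of proper submodules of $M$, the generator $v$ forces its union to be proper). Then $M/N$ is an irreducible $\fg_0$-module. Applying Lemma \ref{Lemma2} again, this time to $N$ viewed as a submodule of $V$, gives $V=N\oplus N^{\perp}$; intersecting with $M$ and using $N\subset M$ yields
\[
M=N\oplus(M\cap N^{\perp}).
\]
The summand $M\cap N^{\perp}$ is $\fg_0$-isomorphic to $M/N$, hence is an irreducible submodule of $W^{\perp}$ orthogonal to every $W_i$. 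This contradicts the maximality of the family $\{W_i\}$, so $W^{\perp}=0$ and $V=\bigoplus_{i\in I}W_i$ is the desired orthogonal decomposition.

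The main obstacle is precisely the step of producing an irreducible submodule inside $W^{\perp}$: such a $(\fg_0,\fk_0)$-module need not contain any finite-dimensional $\fg_0$-submodule, so one cannot hope to pick an irreducible simply by finite-dimensional considerations. The trick is to pass to a cyclic (hence finitely generated) submodule $M$, where Zorn's lemma supplies a maximal proper submodule $N$, and then to invoke unitarity via Lemma \ref{Lemma2} to realize the irreducible quotient $M/N$ as an actual submodule $M\cap N^{\perp}$ of $V$.
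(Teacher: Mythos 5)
Your proof is correct, and it diverges from the paper precisely at the decisive step: producing an irreducible submodule inside a nonzero submodule of $V$. The paper runs Harish-Chandra's classical device: among pairs $(W',\theta)$ with $W'_\theta\neq 0$ it chooses one with $\dim W'_\theta$ minimal and positive, takes the cyclic module $W''$ generated by the whole isotypic component $W'_\theta$, shows every proper submodule of $W''$ meets $W'_\theta$ trivially and hence is orthogonal to it, so that the sum $Z$ of all proper submodules is still proper, and extracts $W''\cap Z^\perp$ as the irreducible; only then does it take a maximal orthogonal family, exactly as you do. You instead take the cyclic module $M=\cU(\fg_0)v$ generated by a single vector, use the generator to make Zorn's lemma yield a maximal proper submodule $N$ (proper submodules of $M$ cannot contain $v$), and then apply Lemma \ref{Lemma2} a second time, to $N\subset V$, to split $M=N\oplus(M\cap N^{\perp})$ and realize the irreducible quotient $M/N$ as an actual submodule; the remaining details (positive definiteness giving $N\cap N^{\perp}=0$, $M\cap N^{\perp}\subset W^{\perp}$ orthogonal to the maximal family) all check out. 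Your route is the standard ``pre-Hilbert module with orthocomplementation is semisimple'' argument: it is arguably more economical and more general, touching the finite dimensionality of the isotypic spaces only through Lemma \ref{Lemma2}, at the cost of a second appeal to Zorn. The paper's route replaces that appeal by the minimal-dimension trick on isotypic components, thereby exploiting the $\fk_0$-structure more directly; this is the variant the authors later reuse (remark following Proposition \ref{prop1}) when they want irreducible subquotients in settings where an invariant hermitian product, and hence your splitting step, is not available.
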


\begin{proof}
We shall first show that if $W\subset V$ is any submodule, then $W$ has
an irreducible submodule. This is a standard argument of Harish-Chandra.
Consider pairs $(W' , \theta)$ for submodules $W'\subset W$ and 
$\theta$ such that $W_\theta'\neq 0$.
Among these choose one for which $W_\theta'$  has the smallest dimension; let
$(W', \theta)$ be the corresponding pair. Let $W''$ be the cyclic 
submodule of $W'$ generated by $W_\theta'$. If $L$ is a proper submodule of 
$W''$, then we claim that $L \cap W_\theta'$  is either $0$ or equal to 
$W_\theta'$. Otherwise dim$(L_\theta)$  is positive
and strictly less than dim($W_\theta'$ ), a contradiction. 
It cannot equal $W_\theta'$, as
then $L = W''$. So $L \cap W_\theta'= 0$, hence $L \perp W_\theta'$. 
In other words all proper
submodules of $W''$ are orthogonal to $W_\theta'$, 
showing that their sum is still
proper. Let $Z$ denote this sum. Then $W'' \cap Z^\perp$ 
is an irreducible submodule of $W''$.

This proves the existence of irreducible submodules of $V$. Let ($V_i$)
be a maximal family of mutually orthogonal irreducible submodules of
$V$. If $Y := \oplus_i V_i \neq V$, then $Y^\perp$ 
will contain an irreducible submodule,
contradicting the maximality of ($V_i$). Hence the lemma.
\end{proof}

\begin{corollary}\label{cor-sec3}
Let the notation be as above.
If $V$ is finitely generated, then $V$ is an orthogonal direct sum
of finitely many irreducible submodules.
\end{corollary}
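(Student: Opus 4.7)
The plan is to combine Lemma \ref{lemma3.3} with the definition of finite generation; no new analytic or representation-theoretic input is needed beyond what has already been established.

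First I would invoke Lemma \ref{lemma3.3} to obtain an orthogonal decomposition $V = \bigoplus_{\alpha \in A} V_\alpha$ into irreducible $(\fg_0,\fk_0)$-submodules. Then, choosing generators $v_1,\ldots,v_n$ of $V$ as a $\cU(\fg_0)$-module, I would observe that each $v_j$ has only finitely many nonzero components in this decomposition, simply because an element of a (algebraic) direct sum is by definition a finite sum. So there are finite subsets $F_j \subset A$ with $v_j \in \bigoplus_{\alpha \in F_j} V_\alpha$.

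Set $F = \bigcup_{j=1}^n F_j$, still finite. Since each $V_\alpha$ is a $\cU(\fg_0)$-submodule, the submodule $\bigoplus_{\alpha \in F} V_\alpha$ is $\cU(\fg_0)$-stable and contains every generator $v_j$, hence it contains all of $V$. Therefore $V = \bigoplus_{\alpha \in F} V_\alpha$, a finite orthogonal direct sum of irreducibles.

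I do not expect any real obstacle: the corollary is a bookkeeping consequence of Lemma \ref{lemma3.3}, using only the distinction between a direct sum (finitely supported) and a direct product. The one point worth emphasising in writing — to avoid a possible confusion — is that although the Hilbert space completion $\cH$ of $V$ may well involve infinitely many $V_\alpha$, the algebraic module $V$ itself cannot, once it is finitely generated.
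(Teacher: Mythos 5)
Your argument is correct and is essentially the paper's own proof: the paper likewise applies Lemma \ref{lemma3.3} and notes that each of the finitely many generators lies in a finite subsum of the irreducible pieces, which forces the whole decomposition to be finite. Your write-up just spells out the bookkeeping (the $\cU(\fg_0)$-stability of the finite subsum) that the paper leaves implicit.
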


\begin{proof}
Each generator lies in a finite sum of the $V_i$. Since there are only
finitely many generators, the corollary follows.
\end{proof}

We are now ready to prove our main result for this section.

\medskip
\textit{Proof of Theorem \ref{main0}}.
Since $\fg_0$ leaves invariant $V_0$, $V_1$  
separately, $\pi_0$ can be constructed separately on the closures of $V_0$ and 
$V_1$, by
the Theorem \ref{Theorem1Wallach} in Subsec. \ref{hc-ordinary}, 
and so the full $\pi_0$ is even.
We know that $V\subset C^\omega (\pi_0)$, again by  Theorem \ref{Theorem1Wallach}.
Theorem \ref{theorem1} of Sec. \ref{intro-sec} now proves the present theorem.

\begin{remark}
In the special case of highest weight modules, the proof of 
unitarizability is simpler.
In view of our corollary to Lemma \ref{lemma3.3},
it is enough to show, besides (U1) and (U2), only that the $V_i$ 
are highest weight modules for $\fg_0$, because 
the conditions in Cor. \ref{cor-sec3} are
automatically verified (see \cite{musson} Ch. 8).
\end{remark}

\subsection{Construction of Harish-Chandra modules for $(\fg, \fk)$}
Apart from the highest weight
modules we have not produced any other Harish-Chandra modules
(see \cite{cfv}). In this section we
do precisely this. 
We need some preliminary remarks. 

\medskip
Let $M$  be a Harish-Chandra module for $\fg_0$
and define $V := \cU (\fg) \otimes_{\cU(\fg_0 )} M$.

\medskip
By Poincar\'e-Birkhoff-Witt theorem, if $X_1 , X_2 ,$ $\dots ,$ $X_r$ is a basis
for $\fg_1$, and $L$ is the span of all the $X_{i_1}  \dots X_{i_m}$ where 
$i_1 < . . . <i_m$ , $m \leq r$,
then $\cU (\fg) =L\,\cU (\fg_0 )$. Although $\fg_1$ is stable under 
$\ad(\fg_0 )$, 
this is not true of $L$. Let $R$ be the linear span of all monomials 
$X_{i_1} \dots X_{i_m}$ where the $i$'s
are not ordered and satisfy only $1 \leq i_1 , i_2 , \dots , i_m \leq r$, 
$m \leq r$. Then $R$ is
finite dimensional, stable under $\ad(\fg_0 )$, graded, and 
$R\,\cU (\fg_0 ) = \cU (\fg)$.
Hence
$$
\cU (\fg) \otimes_{\cU(\fg_0 )} M = R \otimes_{\cU(\fg_0 )} M.
$$
The action of $\cU(\fg)$ on $V$ is by the left on the first factor. Since $R$ 
is $\ad(\fg_0 )$-stable, the action of $\fg_0$ is the tensor product of 
the adjoint action on $R$
and the action on $M$. We recall a well known result. If $p$, $q$, $r$ are three
irreducible representations of $\fk$, we write $p < q \otimes r$ if $p$ occurs in $q \otimes r$.
Then:
$$
p < q \otimes r \quad \iff \quad r^* < q \otimes p^*
$$
where $a^*$ is the dual representation of $a$. This follows from the fact that
$p < q \otimes r$ if and only if $q \otimes r \otimes p^*$ contains the trivial representation, and
hence if and only if $r^* < q \otimes p^*$.

\begin{proposition}\label{prop1}
Let $M$ be a Harish-Chandra module for $\fg_0$. Then $V := 
\cU (\fg) \otimes_{\cU(\fg_0 )} M$
is a Harish-Chandra module for $(\fg, \fk)$.
\end{proposition}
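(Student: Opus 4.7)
The plan is to build on the PBW-style decomposition $\cU(\fg) \otimes_{\cU(\fg_0)} M \cong R \otimes_\C M$ described in the preamble. Every element of $V$ has the form $\sum r_i \otimes m_i$ with $r_i \in R$, $m_i \in M$; for $X \in \fg_0$, the identity $Xr = [X,r] + rX$ in $\cU(\fg)$, combined with $\ad(\fg_0)$-stability of $R$, gives $X(r \otimes m) = [X,r] \otimes m + r \otimes Xm$, so that as a $\fg_0$-module $V$ is the diagonal tensor product of $R$ (on which $\fg_0$ acts by $\ad$) and $M$.

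Local $\fk$-finiteness is then immediate: given $v = \sum_{i=1}^n r_i \otimes m_i$, let $N \subset M$ be the finite-dimensional $\fk$-submodule generated by $m_1, \ldots, m_n$, which exists because $M$ is a Harish-Chandra module and decomposes into finitely many irreducible $\fk$-components. Then $v \in R \otimes N$, a finite-dimensional $\fk$-stable subspace.

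The main step is to show that $R \otimes N$ decomposes into irreducible $\fk$-modules. Since $\fk$ is of compact type, its semisimple part acts completely reducibly by Weyl's theorem, so the only potential obstruction is the semisimplicity of the center $Z(\fk)$. On $N$ this is part of the Harish-Chandra condition on $M$. On $R$, each $c \in Z(\fk)$ acts by $\ad(c)$: compactness of $\fk$ makes $\ad(c)$ semisimple on the finite-dimensional $\fg$, hence semisimple as a derivation on $\cU(\fg)$, hence semisimple on the invariant subspace $R$. On $R \otimes N$ the central element $c$ acts as $\ad(c) \otimes 1 + 1 \otimes c$, a sum of commuting semisimple operators, hence semisimple. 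This gives the required decomposition and so $V$ is a $(\fg,\fk)$-module.

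As an aside, the reciprocity $p < q \otimes r \iff r^* < q \otimes p^*$ recalled just before the proposition lets one upgrade this to the stronger statement that each isotypic component $V_\theta$ is finite-dimensional: only finitely many $\fk$-types $q$ occur in $R$, and for each, only finitely many $p \in \hat{\fk}$ with $M_p \neq 0$ satisfy $p^* < q \otimes \theta^*$ (since $q \otimes \theta^*$ is finite-dimensional), and each such $M_p$ is itself finite-dimensional. The main obstacle in the argument is the semisimplicity of $\ad(Z(\fk))$ on $R$; everything else is a formal consequence of the PBW decomposition.
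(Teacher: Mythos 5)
Your argument is correct, and its skeleton is the paper's own: the decomposition $\cU(\fg)=R\,\cU(\fg_0)$ with $R$ finite dimensional and $\ad(\fg_0)$-stable, the diagonal $\fg_0$-action on $R\otimes M$, and the reciprocity $p<q\otimes r \iff r^*<q\otimes p^*$. The difference is one of emphasis: what you present as an aside is the entirety of the paper's proof, since the paper reads ``Harish-Chandra module for $(\fg,\fk)$'' as requiring $\dim V_p<\infty$ for every class $p$ of $\fk$ and proves exactly that by the finiteness count you sketch; conversely, your ``main step'' (local $\fk$-finiteness and complete reducibility of $R\otimes N$ via semisimplicity of the center of $\fk$ on $R$) is taken for granted in the paper, which simply speaks of ``the irreducible classes in $R$''. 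So your writeup covers both halves and is, if anything, more complete; to match the paper's reading you should promote the aside to part of the official proof. Two small precision points: the natural map $R\otimes_{\C}M\to V$ is only a surjection, not an isomorphism (for instance $[X_1,X_2]\otimes m$ and $1\otimes [X_1,X_2]m$ have the same image), but surjectivity is all you use, since quotients of completely reducible $\fk$-modules are completely reducible and isotypic components map onto isotypic components; and the semisimplicity of $\ad(c)$ on $\fg_1$ for $c$ in the center of $\fk$ is not a formal consequence of compactness of $\fk$ alone (an abstract finite-dimensional representation of a compact-type algebra can fail to be semisimple through the center) --- the correct reason is that elements of $\fk$ are semisimple elements of the semisimple algebra $\fg_0'$, being skew-symmetric for a positive definite form built from the Cartan involution, and semisimple elements act semisimply in every finite-dimensional $\fg_0'$-module, in particular on $\fg_1$ and hence on $R$.
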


\begin{proof}
We must show that for any irreducible class
$p$ of $\fk$, $dim(V_p ) < \infty$. Let $r_1 , \dots r_t$ be the irreducible classes in $R$. $M$ is
the direct sum of the $M_q$ for the various irreducible classes $q$ of $\fk$, and we
know that $dim(M_q ) < \infty$ for all $q$. Now, by our remark above, $p$ occurs
in $r \otimes q$ if and only if $q^* < r \otimes p^*$. Taking $r = r_1 , \dots , r_t$ and fixing $p$, this
gives only finitely many choices for $q$. Let $Q$ be the finite set of $q$ such
that $q^* < r_j \otimes p^*$ for some $j = 1, 2, \dots , t$. Hence
$$
V_p \subset R \otimes \oplus_{ q\in Q} M_q 
$$
showing that $dim(V_p ) < \infty$.
\end{proof}

\begin{remark} By a slight variation of the argument in the Lemma \ref{lemma3.3} 
we can show
that $V$ has subquotients which are irreducible. 
Starting with a module
$M$ for which the weight spaces are not all finite dimensional, one of the
subquotients of a finite composition series for $V$ will have this property
and so will not be a highest weight module. These modules were studied
in \cite{ev, enright} for ordinary Lie algebras and the above theory
allows us to build non highest weight Harish-Chandra modules for
Lie superalgebras. We plan to explore this further in a forthcoming paper.
\end{remark}

\section{Infinitesimal unitarity} \label{inf-unit-sec}

\subsection{Harish-Chandra homomorphism} \label{hc-homo-sec}
Let $\fgc$ be a contragredient complex Lie superalgebra (see \cite{kac}).
The Harish-Chandra homomorphism
\beq
\beta\colon Z(\fgc)\to S(\fh_\C)^W
\eeq
identifies the center
$Z(\fgc)$ of the universal enveloping
algebra with the subalgebra $I(\fh_\C)$ of $S(\fh_\C)^W$ (see \cite{ka4}):
$$
I(\fh_\C)=\{ \phi \in S(\fh_\C)^W \, | \, \phi(\lambda + t \al)=
\phi(\lambda), \,\forall\, \lambda \in \langle \al \rangle^\perp, 
\, \al \, \hbox{isotropic},\, \forall\,t\in \C\}
$$
For any $\mu \in \fh_\C^*$, 
let $\cU[\mu]$ be the subspace of $\cU(\fgc)$ given by 
$$
\cU[\mu] = \{a \in \cU(\fg_\C) \,| \, [H, a] = \mu(H)a \qquad\forall H \in \fh_\C\}. 
$$
Then $\cU [0]$ is a subalgebra, $Z(\fgc) \subset \cU [0]$,
and $(\cU [\mu])$ is a grading of $\cU(\fgc)$; 
moreover $\cU [\mu] \neq 0$ if and only if $\mu$ is in the
$\Z$-span of the roots. 
If $\gamma_1 , \dots , \gamma_t$
is an enumeration of the positive roots, 
$\Delta^+=\{\gamma_1 , \dots , \gamma_t\}$ and $(H_i )$ is a basis for $\fh_\C$, 
then elements of $\cU[0]$ are linear combinations of monomials:
\beq \label{u0el}
X^{p_1}_{-\gamma_1} \dots H_1^{c_1} \dots X_{\gamma_1}^{n_1}
\eeq
with 
$(p_1 - n_1 )\gamma_1 + \dots = 0$. It is
then clear that every term occurring in such a linear combination must
necessarily have some $p_i > 0$ except those that are just monomials in the
$H_i$ alone. So for any $u \in \cU[0]$ we have an element 
$\beta(u)\in \cU (\fh_\C)$ such that
\beq\label{eqbeta}
u \cong \beta(u)(mod \cP), \qquad \cP =\sum_{\ga > 0} \cU (\fgc)\fg_\gamma,
\quad
\gamma  \in \Delta^+
\eeq 
Let $\lambda \in \fh_\C^*$.
The action of $u$ on the Verma module $V_\lambda$ must leave the weight spaces
stable since it commutes with $\fh_\C$, and so applying it to the highest weight
vector $v_\lambda$ we see that $uv_\lambda = \beta(u)(\lambda)v_\lambda$ 
where we are identifying $\cU (\fh_\C)$
with the algebra of all polynomials on $\fh_\C^*$,
so that $\beta(u)(\lambda)$ makes sense. It
follows from this that if $u \in U (\fh_\C) \cap \cP$ then 
$u(\lambda)$ = 0 for all $\lambda$ and so $u = 0$,
i.e., $\cU (\fh_\C) \cap \cP = 0$. Hence $\beta(u)$ 
is uniquely determined by the equation
(\ref{eqbeta}).

\medskip
We extend the homomorphism $\beta : \cU [0] \lra \cU (\fh_\C)$ 
 to a linear map $\cU (\fgc) \lra \cU (\fh_\C)$ by making it 0 on 
$\cU [\mu]$ for
$\mu \neq 0$.

\subsection{Hermitian forms}\label{herm-sec}

Let $V$ be a complex super vector space. 
An  \textit{hermitian form} on $V$ 
is a complex valued sesquilinear form $(,)$ (linear
in the first, antilinear in the second argument) such that:
\beq \label{hp-def-eq}
(u,v)=(-1)^{|u||v|}\overline{(v,u)}, \quad \forall u,v \in V
\eeq
and $(u,v)=0$ for $|u|\neq|v|$, where $|u|$ denotes
the parity of an homogeneous element $u \in V$ (see \cite{vsv2} pg 111
and \cite{fg} Sec. 4). 
If $X$ is an endomorphism of
$V$, we define its \textit{adjoint} $X^*$ as 
\beq \label{adjoint-def}
(Xu,v)=(-1)^{|u||X|}(u,X^*v), 
\eeq

One can immediately verify
that (see \cite{vsv2} pg 110):
\beq\label{hermpr}
\langle u,v \rangle = \begin{cases} i(u,v) & |u|=|v|=1 \\
(u,v) & \hbox{otherwise}
\end{cases}
\eeq
is an ordinary hermitian form. If $X^\dagger$ is the adjoint
with respect to this ordinary hermitian form, we have
that $X^*=i^{|X|} X^\dagger$. 
In fact, taking $|u|=|X|=1$, $|v|=0$, we have
$(Xu,v)=-(u,X^*v)$ and
\beq\label{dagger}
(Xu,v)=-i \langle Xu,v \rangle = -i \langle u, X^\dagger v \rangle
\eeq 
A similar calculation is done if $|u|=0$, and $|X|=|v|=1$.

$(,)$ is an \textit{hermitian product} on $V$ 
if $(,)$ and $i(,)$ are 
positive definite on $V_0$ and $V_1$ respectively, i.e.
if the ordinary form $\langle \, ,\, \rangle$ is positive definite on $V$.

Let $V$ be a $\fgR$ module, $\fgR$ a real
Lie algebra, via the representation $\pi$. 
$V$ (or $\pi$) is said to be {\it unitary} if there is an
hermitian product $( , )$ for $V$ such that
\beq\label{unitary-rep}
(\pi(X)u, v) = -(-1)^{|u||X|}(u, \pi(X)v)
\qquad (u, v \in V, X \in \fgR ),
\eeq
(see \cite{vsv2} pg 111). This implies:
\beq\label{unit-conds}
\pi(X)^* =\begin{cases}
-\pi(X), \qquad |X|=0 \\
+\pi(X), \qquad |X|=1
\end{cases} \qquad \qquad 
\pi(X)^\dagger =\begin{cases}
-\pi(X), \qquad |X|=0 \\
-i\pi(X), \qquad |X|=1
\end{cases}\qquad \qquad
\eeq
As one can readily check, this is equivalent to (U1), (U2) in Sec. 
\ref{intro-sec}, with (\ref{hermpr}) as hermitian product there. In fact,
while condition (U1) regards the ordinary case, 
condition (U2) is expressed for $|X|=|u|=1$ (similarly for
$|X|=|v|=1$) as:
$$
\langle e^{-i\pi/4}\pi(X)u,v \rangle =  \langle u,  e^{-i\pi/4}\pi(X)v \rangle
$$
that is:
$$
\langle \pi(X)u,v \rangle =  i\langle u,  \pi(X)v \rangle
$$
This implies the condition of unitarity to be $\pi(X)^\dagger=-i\pi(X)$, 
in agreement with (\ref{unit-conds}).

\medskip
Let $\fgR$ be a real form of contragredient complex superalgebra
$\fgc$ and $\fgR=\fkR
\oplus \fpR$ its Cartan decomposition. We assume $\fgc$ to
satisfy the {\sl equal rank condition}:
$$
\fhc \subset \fkc \subset \fgc
$$
for a fixed Cartan subalgebra $\fhc$. Assume also that
$\fkc$ has a non trivial center.
Then $\fkc$ and $\fpc$ decompose into the sum of root spaces
and the root system of $\fgc$ has admissible positive systems
and we fix $P$ one of such (see \cite{cf1}).
We can extend the antiautomorphism
$X \lra -(-1)^{|X|}X$ on $\fgR$ to an antiautomorphism of
$\cU (\fgR)$. Then, this antiautomorphism can be furtherly 
extended to a conjugate linear antiautomorphism
of $\cU(\fgc)$, that we denote by $a \lra a^\star$ and call the \textit{adjoint}.
It has the following 
properties: 
\begin{enumerate}
\item $a^{\star\star} = a$ 
\item $(ab)^\star =(-1)^{|a||b|} b^\star a^\star$ 
\item $a^\star$ is conjugate linear in $a$ 
\item $X^\star = -(-1)^{|X|}X$ forall $X \in \fgR$. 
\end{enumerate}
It is uniquely determined by these requirements. 
We then can extend the unitary condition 
for a representation expressed in (\ref{unitary-rep}):
$$
(\pi(X)u,v)=(u,  \pi(X)^\star v)
\qquad X \in \cU(\fgc)
$$

\subsection{Unitary highest weight representations}
We now wish to give a criterion for an highest weight
representation of $\fgc$ to be unitary. We shall follow closely \cite{hc-iv}.

\begin{lemma}\label{lemma1}
Let $\pi_\lambda$ be a unitary highest weight representation of $\fgc$
of highest weight $\lambda$. Then 
$(-i)^{|a|}\beta(a^*a)(\la)>0$ 
for all $a \in \cU(\fgc)$.
\end{lemma}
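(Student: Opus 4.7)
My plan is to identify $(-i)^{|a|}\beta(a^\star a)(\lambda)$ with the super-hermitian norm squared of $\pi_\lambda(a)v_\lambda$ (after complex conjugation and the appropriate power of $i$), so that unitarity yields the positivity directly.

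First I would fix a highest weight vector $v_\lambda$, taken even and normalized so $(v_\lambda,v_\lambda)=1$, and take $a\in\cU(\fgc)$ of definite $\Z_2$-parity. I decompose $a=\sum_\mu a_\mu$ along the $\fh_\C$-weight grading $\cU(\fgc)=\bigoplus_\mu\cU[\mu]$. Under the equal-rank assumption $\fh_\C\subset\fk_\C$, the subalgebra $\fh_\R=\fh_\C\cap\fk_\R$ is a compact Cartan with $H^\star=-H$ and with roots purely imaginary on it; a direct check using the conjugate-linear antiautomorphism property of $\star$ gives $(\cU[\mu])^\star=\cU[-\mu]$. Consequently the zero-weight part of $a^\star a$ is $\sum_\mu a_\mu^\star a_\mu$, and since $\beta$ is extended by $0$ off $\cU[0]$ we have $\beta(a^\star a)=\sum_\mu\beta(a_\mu^\star a_\mu)$.

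The core of the argument is the identity
\[
(\pi_\lambda(a)v_\lambda,\,\pi_\lambda(a)v_\lambda) \;=\; \overline{\beta(a^\star a)(\lambda)}.
\]
To obtain it I would combine three ingredients: (i) for $u\in\cU[0]$, the defining relation (\ref{eqbeta}) together with $\cP v_\lambda=0$ (positive root spaces annihilate $v_\lambda$) yields $\pi_\lambda(u)v_\lambda=\beta(u)(\lambda)v_\lambda$; (ii) skew-adjointness of $\pi_\lambda(H)$ for $H\in\fh_\R$ forces weight vectors of distinct weight to be orthogonal, so only the zero-weight part of $a^\star a$ contributes to $(v_\lambda,\pi_\lambda(a^\star a)v_\lambda)$; (iii) the super-unitarity $\pi_\lambda(a)^*=\pi_\lambda(a^\star)$, applied with the even vector $v_\lambda$ on one side, gives $(v_\lambda,\pi_\lambda(a^\star a)v_\lambda)=(\pi_\lambda(a)v_\lambda,\pi_\lambda(a)v_\lambda)$.

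The conclusion is a sign analysis. The vector $\pi_\lambda(a)v_\lambda$ has parity $|a|$, and the positivity axioms for a hermitian product force $(u,u)\in\R_{\geq 0}$ for $|u|=0$ and $(u,u)\in -i\,\R_{\geq 0}$ for $|u|=1$. Applying this to the displayed identity, conjugating, and multiplying by $(-i)^{|a|}$ places $\beta(a^\star a)(\lambda)$ in $\R_{\geq 0}$ in both parities (with strict inequality exactly when $\pi_\lambda(a)v_\lambda\neq 0$). The main obstacle, and really the only non-routine part, is the super-sign bookkeeping: verifying that $\star$ reverses the $\fh_\C$-weight grading under the chosen real structure on the compact Cartan, and checking that when $\pi_\lambda(a^\star)\pi_\lambda(a)$ is evaluated between two even copies of $v_\lambda$ no residual $(-1)$ survives from the super-rule $(Xu,v)=(-1)^{|u||X|}(u,X^*v)$.
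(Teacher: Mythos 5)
Your proposal is correct and is essentially the paper's own argument: both evaluate $i^{|a|}(av_\lambda,av_\lambda)$ for the (even, normalized) highest weight vector, move $a$ across using $\pi_\lambda(a)^*=\pi_\lambda(a^\star)$, identify the result with $\beta(a^\star a)(\la)$ via the highest-weight evaluation, and finish with the $(-i)^{|a|}$ sign analysis; your weight-grading digression ($(\cU[\mu])^\star=\cU[-\mu]$, orthogonality of weight spaces) merely spells out the step the paper compresses into ``by definition of $\beta$''. If anything you are more precise than the stated lemma, since your conclusion $(-i)^{|a|}\beta(a^\star a)(\la)\geq 0$ with strictness exactly when $\pi_\lambda(a)v_\lambda\neq 0$ reflects the true situation (the paper's own proof also tacitly ignores elements annihilating $v_\lambda$).
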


\begin{proof}
Let $v$ be the highest weight vector. It is not restrictive to assume $v$
to be even. By definition of $\beta$ we have:
$$
(av,v)=\beta(a)(\lambda)(v,v), \qquad a \in \cU(\fgc)
$$
Hence: 
$$
0<i^{|a|}(av,av)=i^{|a|}(-1)^{|a|}(a^*av,v)=(-i)^{|a|}\beta(a^*a)(\la)(v,v)
$$
which gives our claim, since $(v,v)=\langle v,v\rangle >0$.
\end{proof}

To ease the notation let $\be_\lambda(a):=\be(a)(\lambda)$, $a \in \cU(\fgc)$.

\begin{lemma}\label{lemma2}
Assume $(-i)^{|a|}\beta_\la(a^\star a)\geq 0$ for all $a \in  \cU(\fgc)$.
Then:
\beq\label{form}
(w,z)=(-i)^{|z||w|}\beta_\la(z^\star w), \qquad w,z \in \cU(\fgc)
\eeq
defines a semipositive definite
supersymmetric sesquilinear form 
on  $\cU(\fgc)$, whose radical $\cR$ is a left ideal.
\end{lemma}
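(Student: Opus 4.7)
The plan is to verify four things in turn — sesquilinearity, semi-positive definiteness, super-symmetry, and the left-ideal property of the radical. Sesquilinearity is immediate: the map $w\mapsto z^\star w$ is linear in $w$ and, by property (3) of $\star$, conjugate linear in $z$, while $\beta_\la$ is linear and the factor $(-i)^{|z||w|}$ is extended over homogeneous decompositions. Semi-positive definiteness is essentially the hypothesis itself: on homogeneous $w$,
\[(w,w)=(-i)^{|w|^2}\beta_\la(w^\star w)=(-i)^{|w|}\beta_\la(w^\star w)\ge 0,\]
using $|w|^2\equiv|w|\pmod{2}$.

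The crux is the reality identity $\overline{\beta_\la(u)}=\beta_\la(u^\star)$ for all $u\in \cU(\fgc)$. Both sides vanish on $\cU[\mu]$ for $\mu\neq 0$ and on the odd part of $\cU[0]$ (since $\beta$ takes values in the even subalgebra $\cU(\fh_\C)$), so it is enough to check it on $\cU(\fh_\C)$. Applying the hypothesis with $a=H\in\fh_\R$ gives $-\la(H)^2=\beta_\la(H^\star H)\ge 0$, which forces $\la|_{\fh_\R}$ to be pure imaginary; then $\overline{\la(H)}=-\la(H)=\la(H^\star)$, and multiplicativity plus conjugate linearity propagate the identity to all of $\cU(\fh_\C)$. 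Super-symmetry is then immediate: when $|w|\neq|z|$, the element $z^\star w$ is odd of weight $0$, so $\beta_\la(z^\star w)=0$ and $(w,z)=0$; when $|w|=|z|$, property (2) of $\star$ yields $(w^\star z)^\star=(-1)^{|w||z|}z^\star w$, hence
\[\overline{(z,w)}=i^{|w||z|}\overline{\beta_\la(w^\star z)}=i^{|w||z|}(-1)^{|w||z|}\beta_\la(z^\star w)=(-i)^{|w||z|}\beta_\la(z^\star w)=(w,z).\]

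For the left-ideal property, take $w\in\cR$ and $u\in\cU(\fgc)$. Using $z^\star u=(-1)^{|u||z|}(u^\star z)^\star$, I rewrite
\[(uw,z)=(-i)^{|uw||z|}\beta_\la(z^\star uw)=(-i)^{|uw||z|}(-1)^{|u||z|}\beta_\la((u^\star z)^\star w),\]
which is a nonzero scalar multiple of $(w,u^\star z)=0$; hence $uw\in\cR$. The hard part is the reality identity for $\beta_\la$: extracting from the positivity hypothesis that $\la$ must be pure imaginary on $\fh_\R$ is the only non-formal step, and all the graded-sign bookkeeping in the super-symmetry calculation pivots on it.
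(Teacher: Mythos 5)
Your overall structure is reasonable, and several pieces are correct (sesquilinearity, nonnegativity of the diagonal values, the extraction from positivity that $\la$ must be purely imaginary on $\fh_\R$, and the formal computation $(uw,z)=\pm(\mathrm{phase})\,(w,u^\star z)$ for the radical). But the step you yourself call the crux --- the reality identity $\overline{\beta_\la(u)}=\beta_\la(u^\star)$ --- has a genuine gap: the reduction ``it is enough to check it on $\cU(\fh_\C)$'' is not justified by the reasons you list. The even part of $\cU[0]$ is much larger than $\cU(\fh_\C)$: a general even $u\in\cU[0]$ decomposes as $u=\beta(u)+p$ with $p\in\cP\cap\cU[0]$, and while $\overline{\beta_\la(p)}=0$ is trivial, you still have to prove $\beta_\la(p^\star)=0$; vanishing of $\beta_\la$ off $\cU[0]$ and on odd elements says nothing about this, since $p^\star$ is again even of weight zero but need not lie in $\cP$ a priori. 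What is needed (and missing) is the observation that $\star$ carries $\cP=\sum_{\al>0}\cU(\fgc)\fg_\al$ into $\sum_{\al>0}\fg_{-\al}\,\cU(\fgc)$, because $X_\al^\star\in\C X_{-\al}$, together with a PBW/triangular-decomposition argument (the mirror image of the one used to define $\beta$ in Sec. \ref{hc-homo-sec}) showing that $\beta$ vanishes on $\sum_{\al>0}\fg_{-\al}\,\cU(\fgc)$. With that supplement your identity, and hence the symmetry computation, goes through; without it the central claim is not established.

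It is also worth noting that the paper avoids your crux entirely: after checking $(w,z)=0$ for $|w|\neq|z|$ and that the diagonal values are nonnegative by hypothesis, it invokes the standard polarization fact (a sesquilinear form whose diagonal values are all real is hermitian) to get the symmetry, with no reality identity for $\beta_\la$ needed. For the left-ideal statement the paper defines $\cR$ as the set of null vectors and uses Cauchy--Schwarz, whereas your argument treats $\cR$ as the kernel of the form; the two agree once positive semidefiniteness and hermitian symmetry are in place, so your version is fine (and arguably cleaner), but it therefore also depends on first completing the symmetry step. If you repair the reduction as indicated, your route is a legitimate alternative; as written, the proof of the key identity is incomplete.
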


\begin{proof}
By (\ref{form}) and by the definition of $\beta$ and $\star$
we immediately have that $(,)$ is sequilinear: linear in the
first and antilinear in the second argument.  
Notice that 
$(w,z)=0$ if $|z| \neq |w|$. In fact, if $|z| \neq |w|$,  $|z^\star w|=1$,
hence $z^\star w \not\in \cU[0]$, which consists of even elements only
and $\beta_\la$ is zero on $\cU[\mu]$, for $\mu \neq 0$.
Moreover since $(-i)^{|a|}\beta(a^\star a)(\la) \geq 0$, we have
that $\langle a, a \rangle \geq 0$, by (\ref{hermpr}).
By a standard argument in ordinary linear algebra (see
\cite{pedersen}),
this implies that 
$\langle a, b \rangle = \overline{\langle b, a \rangle}$, $|a|=|b|$
and this concludes the first part of the proof.

\medskip
Let $\cR$ be the set of $z \in \cU(\fgc)$ with $\|z\|:=\sqrt{\langle z,
z \rangle}=0$.
If $z,z' \in \cR$, that is $\|z\|=\|z'\|=0$, by $\|z+z'\|\leq \|z\|+\|z'\|$,
we immediately have that $\cR$ is a subspace. Furthermore
if 
$b^\star=b'$, for $b,b'\in \cU(\fgc)$, we have:
$$
(w,bz)=(-i)^{|w||bz|}\beta_\la((bz)^\star w)=\pm (-i)^{|w||b||z|}
\beta_\la(z^\star b^\star w)=\pm(b'w,z)
$$
Hence $\|(w,bz)\|\leq \|b'w\|\|z\|$. If $z\in \cR$, 
i.e. $\|z\|=0$ and $w=bz$, we
get $\|bz\|=0$, hence $\cR$ is a left ideal.
\end{proof}
\begin{lemma}\label{lemma3}
Let the notation be as above. 
Assume $(-i)^{|a|}\beta_\la(a^\star a) \geq 0$ for all $a \in  \cU(\fgc)$.
Then $\cU(\fgc)/\cR$ is
a unitary representation of $\fgc$ with highest weight $\la$.
\end{lemma}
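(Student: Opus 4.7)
The plan is to show that the quotient $\cU(\fgc)/\cR$, equipped with left multiplication and the descended form, is a unitary highest weight module with highest weight $\la$; the argument breaks into four essentially independent verifications.

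First, I would set up the underlying data. By Lemma~\ref{lemma2} $\cR$ is a left ideal, so $\cU(\fgc)/\cR$ inherits a canonical left $\cU(\fgc)$-module structure and in particular a representation of $\fgc$. Writing $\bar w := w + \cR$, the form $(,)$ descends to a well-defined sesquilinear form on the quotient: for $r\in\cR$ we have $\langle r, z\rangle = 0$ for every $z$ by Cauchy--Schwarz applied to the positive semidefinite ordinary form $\langle\cdot,\cdot\rangle$ of \eqref{hermpr}, hence $(r,z)=0$, and by supersymmetry (established in Lemma~\ref{lemma2}) the same holds in the second slot. By construction the radical of the descended form is trivial, so $(,)$ is an hermitian product on $\cU(\fgc)/\cR$ in the sense of \S\ref{herm-sec}.

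Second, I would verify unitarity. Since $(\bar w,\bar z) = 0$ unless $|w| = |z|$, only the case $|X| + |w| \equiv |z| \pmod 2$ needs checking. Unfolding the definition $(w, z) = (-i)^{|w||z|}\beta_\la(z^\star w)$ together with the super-antiautomorphism identity $(X^\star z)^\star = (-1)^{|X||z|}z^\star X$, both $(X\bar w,\bar z)$ and the prescribed super-adjoint expression $(\bar w, X^\star \bar z)$ (weighted by the appropriate sign from \eqref{adjoint-def}) collapse, on the parity-constrained sector, to a common scalar multiple of $\beta_\la(z^\star X w)$, yielding the adjoint relation and hence (U1), (U2).

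Third, I would identify $v_\la := \bar 1$ as a cyclic highest weight vector of weight $\la$. It is nonzero since $\|1\|^2 = \beta_\la(1) = 1 > 0$. For any $H\in\fhc$ with $H^\star = -H$ (i.e.\ $H$ in the real Cartan), a direct expansion using $\beta_\la(H^k) = \la(H)^k$ gives
\[ \beta_\la\bigl((H - \la(H))^\star (H - \la(H))\bigr) = -\la(H)^2 + \la(H)^2 - |\la(H)|^2 + |\la(H)|^2 = 0, \]
so $H-\la(H)\in\cR$ and $H\cdot v_\la = \la(H) v_\la$; by $\C$-linearity this extends to all of $\fhc$. For any positive root $\al\in P$ and any $X_\al\in\fg_\al$, the element $X_\al^\star$ has weight $-\al$, so $X_\al^\star X_\al\in\cU[0]\cap\cP$; by the characterization $\cU(\fhc)\cap\cP = 0$ recalled in \S\ref{hc-homo-sec}, this forces $\beta_\la(X_\al^\star X_\al) = 0$, giving $X_\al\in\cR$ and $X_\al\cdot v_\la = 0$. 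Since $\cU(\fgc)\cdot 1 = \cU(\fgc)$, the quotient is cyclic on $v_\la$.

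The main technical obstacle I anticipate is the unitarity step, where three distinct sign structures --- the conjugate-linear antiautomorphism $\star$ on $\cU(\fgc)$, the operator adjoint $\ast$ of \eqref{adjoint-def}, and the $\Z/2$-grading on $\fgc$ --- interact simultaneously, so the bookkeeping must be done carefully; fortunately the vanishing of $(,)$ off the parity diagonal eliminates all cross-terms and reduces the verification to a short sign check.
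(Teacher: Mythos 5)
Your argument is essentially correct for the lemma as literally stated, but it follows a noticeably different route from the paper, and it stops short of the point the paper's proof is really driving at. What you do: check the form descends and is positive definite, verify the contravariance/adjoint condition for the action (which the paper leaves implicit), and show $\bar 1$ is a nonzero cyclic highest weight vector of weight $\la$ by exhibiting $\cP_\la\subset\cR$ through explicit norm computations --- your identity $\beta_\la\bigl((H-\la(H))^\star(H-\la(H))\bigr)=0$ for $H$ in the real Cartan and your observation that $X_\al^\star X_\al\in\cU[0]\cap\cP$ forces $\beta_\la(X_\al^\star X_\al)=0$ via $\cU(\fhc)\cap\cP=0$ are both correct, and are a cleaner account of the inclusion $\cP_\la\subset\cR$ than the paper's (which handles $X_\al$ by $(X_\al,v)=\be_\la(v^\star X_\al)=0$ from (\ref{eqbeta}) and defers the Cartan part to the ordinary theory). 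What the paper does that you do not: the bulk of its proof is the identification $\cR=\cM_\la$, the maximal left ideal containing $\cP_\la$ --- i.e.\ that $\cU(\fgc)/\cR$ is the \emph{irreducible} highest weight module $\pi_\la$. This is proved there by showing any proper left ideal $\cM'\supseteq\cR$ is $\fhc$-stable, that $\beta_\la$ kills the $\cU[0]$-components of its elements, and hence that $\langle z,z\rangle=0$ for $z\in\cM'$, so $\cM'=\cR$.

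This omission does not invalidate your proof of the statement as quoted, but it matters for how the lemma is used: Theorem \ref{main1} asserts unitarity of $\pi_\la$, the irreducible highest weight representation, so one must know the quotient \emph{is} $\pi_\la$. You can close this gap in one line from what you have already established: in a highest weight module every proper submodule is a weight module missing the highest weight line $\C v_\la$, hence (by orthogonality of distinct weight spaces with respect to a contravariant form) is contained in the radical of the form; since your form is positive definite, there are no nonzero proper submodules, so $\cU(\fgc)/\cR\cong\pi_\la$ and in particular $\cR=\cM_\la$. I would also ask you to write out the sign check in your second step in full, since the interaction of (\ref{adjoint-def}), (\ref{hermpr}) and the factor $(-i)^{|w||z|}$ is exactly where conventions are easiest to garble; as you note, the vanishing of the form off the parity diagonal reduces this to finitely many cases, and it should be displayed rather than asserted.
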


\begin{proof}
One can check right away that $(,)$ is well defined on
$\cU(\fgc)/\cR$. To prove our claim, 
we need to show $\cR=\cM_\la$ the (unique) maximal ideal
containing $\cP_\la=\sum_{\ga \in P}\cU(\fgc)\fgc_\ga+$
$\sum_{\ga \in P}\cU(\fgc)(H_\ga-\la(H_\ga)1)$.
We have $\cP_\la \subset \cR$. They are both left ideals,
so it is enough to show $X_\al \in \cR$ for $\al>0$ and
$H_\ga-\la(H_\ga)1 \in \cR$, the latter being an ordinary statement,
so true for the ordinary theory.
Notice that:
$(X_\al,v)=\be_\la(v^*X_\al)=0$ because of (\ref{eqbeta}), hence
$X_\al \in \cR$. 

\medskip
Now let $\cM'$
be a proper maximal ideal containing $\cR$. We want to
show $\cM'=\cR$. We first notice that it is stable under the $\fh_\C$
action, in fact:
$$
[H,m]=Hm-m(H-\la(H))-\la(H)m \in \cM', \qquad H\in \fh_\C, \quad m \in \cM'
$$
By a standard fact, then also $m_0$, the $\cU[0]$ component of $m$
is in $\cM'$. Then by (\ref{eqbeta}) $m_0 \equiv h$ mod ($\cP$)
and $h \equiv \beta_\la(h)$ mod($\cP_\la$),
so that $m_0 \equiv \beta_\la(h)$  mod($\cP_\la$) for some $h \in 
\cU(\fh_\C)$ ($\cP \subset \cP_\la$).
Since $\cP_\la\subset \cR
\subset \cM'$, we have $\beta_\la(h) \in\cM'$, and being a complex number,
this tells that $\beta_\la(h) =0$, otherwise $\cM'$ would not be
a proper ideal. Hence, also $\beta_\la(m_0) =\beta_\la(h) =0$. 
Now, let $z \in \cM'$. Since $X_\al^\star=c_\al X_{-\al}$ for any root vector
$X_\al$, $\al$ a root of $\fgc$ (see \cite{fg} Sec. 4), we have
$z^\star z \in \cU[0]$.
Taking $m_0=z^\star z$, for any $z\in \cM'$, this gives 
$(-i)^{|z|}\beta_\la(z^\star z)=\langle z,z \rangle =0$, so $\cM'=\cR$. 
\end{proof}

We are ready for the main result of this section.

\medskip
\textit{Proof of Theorem \ref{main1}}.
The first statement is immediate from the previous lemmas.
The second statement comes from the ordinary result in \cite{hc-iv} and easy
calculations. 

\section{Irreducible representations of $\rosp_\R(1|2)$}

\subsection{Introductory remarks.} 
We present here some calculations on highest
weight Harish-Chandra modules for $\fg_\C = \rosp_\C (1|2)$ and the unitary ones of 
$\fg=\rosp_\R(1|2)$.

Let $\N = \{0, 1, 2, . . .\}$. We assume that $t \not\in \N$.

The Lie superalgebra $\fgc$ consists of matrices
$$
\begin{pmatrix}
  0 &\xi & \eta\\
  \eta & a & b\\
  -\xi & c & -a
\end{pmatrix}
$$
where $\xi$, $\eta$ are complex odd variables, $a$, $b$, $c$ complex 
even variables. 
The real form $\fg$ consists of the real Lie superalgebra of matrices
$$
\begin{pmatrix}
  0 &\xi & -i\xibar\\
  -i\xibar & ia & b\\
  -\xi & -\bbar & -ia
\end{pmatrix}
$$
where the variables $\xi$, $b$ are still complex, $a$ is real, and bar denotes complex
conjugation (see \cite{ccf} Appendix A for notation).

The complex basis of $(\fg_\C)_0 = \fsl(2)$ is the standard one $H=E_{22}-E_{33}$, 
$X=E_{23}$, $Y=E_{32}$, $E_{ij}$ denoting the elementary matrices
(see \cite{vsv1} for notation).  The complex basis for the odd part $(\fg_\C)_1$ 
is $\{x, y\}$ where
$$
x = E_{13} + E_{21}, \qquad
y = E_{12} - E_{31}.
$$
For the real form, the even part has real basis $\{iH, X + Y, i(X - Y )\}$ and
the odd part has real basis
$$
x^\sim = -ix + y,\qquad
y^\sim = -x + iy.
$$

\subsection{ Verma modules for $(\fg_\C)_0$ with highest 
weight $t \not\in \N$.} We recall here the ordinary theory.
Let $W_t$ be the Verma module for $(\fg_\C)_0$
of highest weight $t$. Then $W_t$ has basis $\{v_t, v_{t-2}, \dots\}$ where 
$v_t \neq 0$,
$Xv_t =0$, $v_{t-2r} = Y_r v_t$. One knows that all the $v_{t-2r}$ are non zero,
 because of
the identity:
$$
XY^{r+1} = Y^{r+1} X + (r + 1)Y^r (H - r)
$$
in $\cU ((\fg_\C)_0 )$, established easily by induction on $r$. This shows that
\beq\label{(1)}
Xv_{t-2(r+1)} = (r + 1)(t - r)v_{t-2r}.
\eeq
Since $t \not\in \N$,
the factor $(r + 1)(t - r)$ is not zero for any integer $r \geq 0$, it
follows that if some $v_{t-2(r+1)} = 0$, then $v_{t-2r} = 0$, so that we eventually
get $v_t = 0$. That this is irreducible already is seen because of 
(\ref{(1)}). Indeed
(\ref{(1)}) shows that starting with any $v_{t-2r}$, we can reach $v_t$ 
by applying $X$
repeatedly. Thus the Verma modules $W_t$ are already irreducible.
We now want to determine when the $W_t$ are unitary. By unitary we mean
the existence of a hermitian product such that
$$
(Zu, v) = -(u, Zv)
$$
for all $Z$ in the real form of $\fsl(2)$, and $u, v \in W_t$, i.e., 
for $Z = iH$, $X +
Y$, $i(X - Y )$. The main idea is to transfer the condition for unitarity to
the complex Lie algebra $\fsl(2)$. For the Verma modules 
$W_t$, unitarity is equivalent
to assuming that the $v_{t-2r}$ are mutually orthogonal and $X^* = -Y$ or
$Y^* = -X$ or both. In fact, the condition is that $H^* = H$, $(X + Y )^* =
-(X + Y )$, $(X - Y )^* = X - Y$.

\begin{proposition} $W_t$ is unitary if and only if $t$ is real and $t < 0$.
  \end{proposition}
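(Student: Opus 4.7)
\textit{Proof proposal.} The plan is to first convert the unitarity condition on the real form into adjoint relations on the complex Lie algebra $\mathfrak{sl}(2)$, then use self-adjointness of $H$ to force $t\in\mathbb{R}$, and finally compute the squared norms of the weight vectors $v_{t-2r}=Y^r v_t$ inductively, extracting the positivity condition $t<0$ from the sign of the resulting products.

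First I would translate the requirements $(iH)^* = -(iH)$, $(X+Y)^* = -(X+Y)$, $(i(X-Y))^* = -i(X-Y)$ into the adjoint relations
\[
H^* = H, \qquad X^* = -Y, \qquad Y^* = -X
\]
on $\mathfrak{sl}(2)$. Since $H$ is self-adjoint on a prehilbert space, its eigenvalues on $W_t$ are real; in particular $t$ must be real. The vectors $v_{t-2r}$ lie in distinct $H$-eigenspaces and are therefore automatically mutually orthogonal, so all the information is encoded in the diagonal norms $(v_{t-2r},v_{t-2r})$.

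Next, normalize $(v_t,v_t)>0$. Using $v_{t-2(r+1)}=Yv_{t-2r}$, the relation $Y^*=-X$, and the identity \eqref{(1)}, I would compute
\[
(v_{t-2(r+1)},v_{t-2(r+1)}) \;=\; -(v_{t-2r}, Xv_{t-2(r+1)}) \;=\; -(r+1)(t-r)(v_{t-2r},v_{t-2r}),
\]
which iterates to $(v_{t-2r},v_{t-2r}) = (-1)^r r!\,\prod_{k=0}^{r-1}(t-k)\,(v_t,v_t)$. Positive definiteness for every $r\ge 0$ therefore amounts to the requirement $(r+1)(t-r)<0$ for all $r\ge 0$, equivalently $t<r$ for every $r\in\mathbb{N}$; taking $r=0$ gives $t<0$, and this is clearly also sufficient since then each factor $t-k$ is negative and the signs of $(-1)^r$ and $\prod(t-k)$ cancel.

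For the converse, given real $t<0$, I would declare the $v_{t-2r}$ mutually orthogonal and define their norms by the recursion just derived; positivity is automatic, the relations $H^*=H$ and $Y^*=-X$ hold by construction, and $X^*=-Y$ then follows by conjugate-symmetry of the form (equivalently, verifying $(Xv_{t-2r},v_{t-2(r-1)})=-(v_{t-2r},Yv_{t-2(r-1)})$ reduces to the same recursion). There is no real obstacle here once one passes to $\mathfrak{sl}(2)_\mathbb{C}$; the only step that might look subtle is forcing $t\in\mathbb{R}$, but this is immediate from the spectral theorem applied to $H$.
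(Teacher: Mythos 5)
Your proposal is correct and follows essentially the same route as the paper: translate unitarity into the adjoint relations $H^*=H$, $X^*=-Y$, $Y^*=-X$ on complexified $\fsl(2)$, use identity \eqref{(1)} to get the recursion $N(r)=-r(t-r+1)N(r-1)$ for the norms of the orthogonal weight vectors, and read off necessity and sufficiency of $t<0$ from positivity of these norms. Your only additions—deriving $t\in\R$ and the mutual orthogonality of the $v_{t-2r}$ from symmetry of $H$—are harmless refinements of points the paper treats implicitly.
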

\begin{proof} Recall that $t \not\in \N$. Let $W_t$ be unitary. Then 
$(v_{t-2}, v_{t-2} )$ $=$
$(Yv_t, v_{t-2} ) = -(v_t,Xv_{t-2} )$. But:
$$
Xv_{t-2} = XY v_t = Y Xv_t + Hv_t = tv_t.
$$
Hence $(v_{t-2}, v_{t-2} ) = -t > 0$ if we normalize $(v_t, v_t ) = 1$ (possible). Hence
$-t > 0$.
For the converse we must, when $t < 0$, define a unique hermitian product such
that $(v_t, v_t ) = 1$ and $X^* = -Y$. The $v_{t-2r}$ are to be mutually orthogonal
and so we need to determine the $N (r) := (v_{t-2r}, v_{t-2r} )$ inductively so that
$X^* = -Y$ and all the $N (r)>0$. The requirement $X^* = -Y$ forces
the relation (by (\ref{(1)})):
$$
(v_{t-2r}, v_{t-2r} ) = (Y v_{t-2(r-1)}, v_{t-2r} ) = -r(t - r + 1)(v_{t-2(r-1)}, 
v_{t-2(r-1)} )
$$
or
$$
N (r) = -r(t - r + 1)N (r - 1), \qquad
N (1) = 1
$$
the second being the normalization $(v_t, v_t ) = 1$. We define $N (r)$ 
inductively by this and note that for $t < 0$ we have $N (r) > 0$ 
for all $r$, since
the factor $-r(t - r + 1)$ is always $> 0$ for $r \geq 1$, as $t < 0$. 
The hermitian
product is now well defined and positive definite. It is now only a question
of verifying that $X^* = -Y$. For this we need only check
$(Yv_{t-2(r-1)}, v_{t-2r} ) = -(v_{t-2(r-1)}, Xv_{t-2r} )$
as all other hermitian products needed are zero. But the 
left hand side is $N (r)$ while
the right hand side is $-r(t-r+1)(v_{t-2(r-1)}, v_{t-2(r-1)} )$, 
which is $-r(t-r+1)N (r-1)$,
and these are equal by definition of $N (r)$.
\end{proof}

\subsection{Super Verma modules for $\fg_\C$}
We report here for completeness some preliminary results.
Let $t \not\in \N$ where $\N = \{0, 1, 2,..., \}$ and let $V_t$ be the $\fg_\C$ 
module with highest weight $t$ and highest weight vector $v_t$. 

\begin{lemma} 
Let $t \not\in \N$ where $\N = \{0, 1, 2,..., \}$. Then the super Verma
module $V_t$ with highest weight $t$ is irreducible.
Let $W_t$ = $\cU((\fg_\C)_0)v_t$ and  $W_{t-1}$ = $\cU((\fg_\C)_0)v_{t-1}$,
$v_{t-1}:=yv_t$.
Then $W_t$, $W_{t-1}$ are irreducible Verma modules
for $(\fg_\C)_0$  and $V_t$ = $W_t\oplus W_{t-1}$.
\end{lemma}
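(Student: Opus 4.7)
The plan is to realize $V_t$ as an explicit weight module with one-dimensional weight spaces, then use the single relation $\{x,y\}=H$ to control everything.

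First, since $\fg_\C^-:=\C Y\oplus \C y$ is closed under the super bracket (with $[y,y]=-2Y$) and complementary to $\fh_\C\oplus \C X\oplus \C x$ in $\fg_\C$, the super Poincar\'e-Birkhoff-Witt theorem gives $V_t=\cU(\fg_\C^-)v_t$. The identity $[y,y]=-2Y$ (computed from $y^2=-E_{32}$ in the defining representation) yields $Y=-y^2$ inside $\cU(\fg_\C)$, so $\{y^m v_t : m\ge 0\}$ is in fact a basis of $V_t$, related to the standard PBW basis $\{Y^a v_t,\,Y^a y v_t\}$ by a triangular change of coordinates. Setting $v_{t-m}:=y^m v_t$, we have $Hv_{t-m}=(t-m)v_{t-m}$, so each weight space is one-dimensional.

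Next, I would compute the action of $x$ on this basis via the recursion $xy=H-yx$ coming from $\{x,y\}=H$. Starting from $xv_t=0$ and inducting on $m$ gives
\begin{equation*}
xv_{t-1}=t\,v_t,\qquad xv_{t-2k}=-k\,v_{t-2k+1},\qquad xv_{t-(2k+1)}=(t-k)\,v_{t-2k}\quad(k\ge 1).
\end{equation*}
The hypothesis $t\notin\N$ makes every scalar $t-k$ and $-k$ nonzero. Any nonzero graded submodule $U$ of $V_t$ contains some $v_{t-m}$; iterating $x$ then produces nonzero multiples of $v_{t-m+1},\ldots,v_t$, forcing $v_t\in U$ and hence $U=V_t$.

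For the splitting, $W_t$ is spanned by $\{Y^n v_t\}=\{(-1)^n v_{t-2n}\}$ and $W_{t-1}$ by $\{Y^n v_{t-1}\}=\{(-1)^n v_{t-2n-1}\}$. Their weight supports $t-2\N$ and $(t-1)-2\N$ are disjoint and together exhaust the weights of $V_t$, so $V_t=W_t\oplus W_{t-1}$ as vector spaces, each summand $(\fg_\C)_0$-stable. Moreover $v_{t-1}$ is a highest weight vector for $(\fg_\C)_0$: indeed $Xv_{t-1}=x^2 v_{t-1}=x(xv_{t-1})=x(tv_t)=0$. Since $t\notin\N$ forces $t-1\notin\N$, the ordinary $\fsl(2)$ analysis at the start of this section shows both $W_t$ and $W_{t-1}$ are irreducible Verma modules. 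The only step requiring genuine care is the inductive computation of $xv_{t-m}$; the nonvanishing of every coefficient rests on the hypothesis $t\notin\N$, which is no stronger than what the classical case already demands.
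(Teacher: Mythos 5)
Your proof is correct; the difference from the paper's argument is mostly one of organization, but it is a real reorganization, so a comparison is worth making. The paper gets the decomposition first: by PBW, $\cU(\fg_\C)=\cU((\fg_\C)_0)\{1,x,y,yx\}$, hence $V_t=\cU((\fg_\C)_0)v_t+\cU((\fg_\C)_0)v_{t-1}$; it then checks $v_{t-1}\neq 0$ (if $yv_t=0$ then $Hv_t=(xy+yx)v_t=0$, forcing $t=0$) and $Xv_{t-1}=0$ (weight reasons), so $W_t$ and $W_{t-1}$ are nonzero highest weight $\fsl(2)$-modules with highest weights $t$, $t-1\notin\N$, hence irreducible Verma modules by the ordinary theory recalled earlier in the section, and the sum is direct because the $H$-spectra are disjoint; irreducibility of $V_t$ itself is left essentially implicit (a nonzero graded submodule is $(\fg_\C)_0$-stable, so contains $W_t$ or $W_{t-1}$, and $yv_t=v_{t-1}$, $xv_{t-1}=tv_t$ tie the two together). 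You instead build the explicit basis $\{y^mv_t\}$ from super PBW applied to $\C Y\oplus\C y$ (your change of basis is in fact diagonal: $y^{2a}=(-1)^aY^a$, $y^{2a+1}=(-1)^aY^a y$), compute the full $x$-action from $xy=-yx+H$ --- which is precisely the content of the paper's Lemma \ref{lemmac}, proved there only later for the unitarity computation --- and then prove irreducibility of $V_t$ directly by climbing with $x$ through the one-dimensional weight spaces, with the splitting read off from the weight supports. Your route buys a self-contained argument: super-irreducibility is proved rather than inferred, and $W_t$, $W_{t-1}$ are identified as genuine Verma modules via the linear independence of the $y^mv_t$ rather than by quoting that a nonzero highest weight $\fsl(2)$-module with non-integral highest weight must coincide with the irreducible ordinary Verma module. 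The paper's route buys brevity, needing at this stage only $xv_t=0$, $Xv_{t-1}=0$ and $H=xy+yx$; and your remark that $t\notin\N$ forces $t-1\notin\N$ is exactly the point the paper uses silently. Both arguments use the hypothesis $t\notin\N$ in the same way, so I see no gap.
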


\begin{proof}
By Poincar\'e-Birkhoff-Witt theorem, 
$\cU(\fg_\C) = \cU((\fg_\C)_0)\{1, x, y, yx\}$. Hence 
$$
V_t = \cU((\fg_\C)_0)v_t+\cU((\fg_\C)_0)v_{t-1}. 
$$
If $v_{t-1} = 0$ then $xv_t = yv_t = 0$, hence, as $H = xy + yx$, we
have $Hv_t$ = $tv_t = 0$ showing that $t = 0$. Also if $Xv_{t-1} \neq 0$,
then it 
has weight $ t + 1$
which is impossible. The modules $\cU(\fg_\C)v_t$, $\cU(\fg_\C)
v_{t-1}$ are then highest weight non zero modules, 
of highest weights $t$, $t-1$. Hence, 
by our assumption that $t \not\in \N$, they
are Verma modules and irreducible. Note the sum is direct since $H$ has disjoint
spectra in the two pieces. Hence the result.
\end{proof}

\begin{corollary} Let the notation be as above.
$V_t$ has basis $\{v_t, v_{t-1}, \dots \}$ where $v_{t-r} = y^r v_t$.
\end{corollary}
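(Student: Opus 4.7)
The plan is to combine the decomposition $V_t = W_t \oplus W_{t-1}$ from the previous lemma with a single super-commutation computation in $\cU(\fgc)$. First, from the ordinary Verma module analysis (the argument of Subsection 4.2 applied once to the highest weight $t$ and once to $t-1$, neither of which lies in $\N$), the submodules $W_t$ and $W_{t-1}$ are irreducible with bases $\{Y^r v_t : r \geq 0\}$ and $\{Y^r v_{t-1} : r \geq 0\}$, of weights $t-2r$ and $t-1-2r$ respectively. Combined with $V_t = W_t \oplus W_{t-1}$, this shows that every weight space $V_t[t-r]$ for $r \geq 0$ is one-dimensional.

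The next step is to compute $y^2$ inside $\cU(\fgc)$. Since $y$ is odd, the defining relation of the enveloping superalgebra gives $y^2 = \tfrac{1}{2}[y,y]$, where $[y,y]$ is the super bracket. Using $y = E_{12} - E_{31}$ and the anticommutator rule for odd matrices, a direct calculation yields $[y,y] = -2E_{32} = -2Y$, hence $y^2 = -Y$ in $\cU(\fgc)$. A similar (easier) matrix computation, or the observation that $-3$ is not a root of $\fgc$, gives $[Y,y] = 0$, so $Y$ and $y$ commute inside $\cU(\fgc)$.

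These two facts let me identify $y^r v_t$ explicitly: since $Y$ commutes with $y$,
\[
y^{2r} v_t = (-Y)^r v_t = (-1)^r Y^r v_t, \qquad y^{2r+1} v_t = (-1)^r Y^r v_{t-1},
\]
and both are non-zero by the irreducibility of $W_t$ and $W_{t-1}$. Since $y^r v_t$ lies in the one-dimensional weight space $V_t[t-r]$ and is non-zero, setting $v_{t-r} := y^r v_t$ produces a basis of $V_t$, which is exactly the statement of the corollary.

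The only non-routine step is the super-bracket computation $[y,y]=-2Y$ together with $[Y,y]=0$; everything else is bookkeeping from the previous lemma and the classical $\fsl(2)$ Verma module description recalled in Subsection 4.2, so the statement is indeed a direct corollary of the preceding lemma.
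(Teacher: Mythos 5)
Your proof is correct and follows essentially the same route as the paper: both rest on the decomposition $V_t = W_t \oplus W_{t-1}$ from the preceding lemma together with the identity $y^2 = -Y$ in $\cU(\fgc)$, from which $y^{2r}v_t = (-1)^r Y^r v_t$ and $y^{2r+1}v_t = (-1)^r Y^r v_{t-1}$. You merely spell out the weight-space bookkeeping and the nonvanishing that the paper's terse appeal to the uniqueness of the super module structure leaves implicit.
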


\begin{proof}
Recall that $y^2 = -Y$. 
Given $t \not\in  \N$ there is only one structure of a super 
$\fg_\C$ module for $W_t\oplus W_{t-1}$,
namely the super Verma with highest weight weight $t$.
\end{proof}

\begin{lemma} \label{lemmac}
Let the notation be as above.
In $\cU (\fg_\C)$ we have
$$
xy^{2m} = y^{2m} x - my^{2m-1},\qquad
xy^{2m+1} = -y^{2m+1} x + y^{ 2m} (H - m)
$$
In particular, in $V_t$,
$$
xv_{t-m} = c_m v_{t-m+1}, \qquad
c_{2m} = -m, \qquad
c_{2m+1} = t - m.
$$
\end{lemma}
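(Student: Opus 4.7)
The plan is to prove the two identities in $\cU(\fg_\C)$ by a simultaneous induction on $m$, using only two basic relations: the odd anticommutator $xy + yx = H$ (equivalently $xy = H - yx$), which follows from the matrix realization of $\rosp_\C(1|2)$ and was already exploited in the preceding lemma; and the weight relation $[H,y] = -y$, equivalently $Hy = yH - y$, which reflects the fact that $y$ lowers the $H$-weight by one.

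First I would dispose of the base case $m = 0$: the odd-exponent identity reads $xy = -yx + H$, which is just the defining anticommutation relation, and the even-exponent identity is trivial once one reads the term $-m y^{2m-1}$ as $0$ at $m=0$. For the inductive step I would chain the two identities together. To pass from the even-exponent identity at stage $m$ to the odd-exponent identity at the same stage, I right-multiply by $y$ and substitute $xy = H - yx$; only the anticommutator relation is used here. To pass from the odd-exponent identity at stage $m$ to the even-exponent identity at stage $m+1$, I again right-multiply by $y$, substitute $xy = H - yx$, and then use $(H - m) y = y(H - m - 1)$ to move the scalar $H - m$ past $y$; the scalar shift produced at this step is precisely what advances the remainder coefficient from $-m$ to $-(m+1)$.

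Once the identities in $\cU(\fg_\C)$ are in hand, specializing to $V_t$ is immediate. Applying both identities to the highest weight vector $v_t$ and using $xv_t = 0$, $Hv_t = tv_t$, together with $v_{t-r} = y^r v_t$, the even-exponent identity gives $xv_{t-2m} = -m\, v_{t-2m+1}$, so $c_{2m} = -m$; the odd-exponent identity gives $xv_{t-(2m+1)} = (t - m)\, v_{t-2m}$, so $c_{2m+1} = t - m$.

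There is no substantive obstacle here: the whole argument is a routine induction on the length of a monomial in $y$. The only point requiring mild bookkeeping is ensuring that the scalar shift $(H - m) y = y(H - m - 1)$ is correctly absorbed at each transition, so that the coefficient $-m$ passes to $-(m+1)$ and the argument of $H$ in the odd-exponent identity advances in step with $m$.
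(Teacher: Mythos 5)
Your proof is correct and follows essentially the same route as the paper's: an induction on $m$ driven by the relations $xy+yx=H$ and $[H,y]=-y$, followed by evaluation on $v_t$ using $xv_t=0$, $Hv_t=tv_t$. The only difference is cosmetic (you right-multiply the previous identity by $y$ and interleave the even/odd cases, while the paper factors $x$ off on the left as $xy^{k+1}=(-yx+H)y^{k}$), so nothing needs to change.
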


\begin{proof} Since $xy = -yx + H$ in $\cU (\fg_\C)$, we have,
by direct calculation,
$xy^2 = y^2 x - y$ and $xy^3 = -y^3 x + y^2 (H - 1)$. Hence the results are true
for $m = 1$. We use induction on $m$. We have
$$
xy^{ 2m+2 }= xyy^{ 2m+1} = (-yx + H)y^{ 2m+1} = y^{ 2m+2} x - (m + 1)y^{ 2m+1}
$$
and
$$
xy^{ 2m+3} = xyy^{ 2m+2} = (-yx + H)y^{ 2m+2} = -y^{ 2m+3 }x + y^{ 2m+2} (H - m - 1)
$$
by direct calculation. The induction is complete. The formulae for $V_t$ are
immediate consequences by applying them to $v_{t-m}=y^{t-m}v_t$.
\end{proof}

\subsection{Unitary Super Verma modules for $\fg_\C$}
The main idea now is to transform the condition for unitarity to the complex
setting. For the ordinary Verma modules $W_t$, unitarity is equivalent to $X^* = -Y$ or 
$Y^* =-X$ or both. In fact, the condition is that $H^* = H$, $(X + Y )^* = -(X +
Y )$, $(X - Y )^* = X - Y$. For supermodules we impose, following \cite{CCTV} and (U2) as in Sec.
\ref{intro-sec},
the condition $\zeta Z$ is symmetric in the hermitian product where 
$\zeta = e^{-i\pi/4}$ (see \cite{CCTV}).
As in the Verma case, we must convert this definition into a condition on
the complex basis for $\fgc$. The condition is that $\zeta x^\sim$, $\zeta y^\sim$ are symmetric
(acting on the module) where $x^\sim = -ix + y$, $y^\sim = -x + iy$. This is the
same as (see (\ref{unit-conds})):
$$
x^{{\sim}\dagger} = -ix^{\sim}, \qquad
y^{{\sim} \dagger} = -iy^{\sim}
$$
or
$$
ix^\dagger + y^\dagger = -x - iy, \qquad
-x^\dagger - iy^\dagger = ix + y.
$$
These are the same as
$$
x^\dagger = -y,
y^\dagger = -x
$$
or even just one of these relations, as the other follows by taking adjoints.
Notice that for $u,v$ even $u^\dagger=u^*$ and $(u,v)=\langle u, v \rangle$,
see Sec. \ref{herm-sec} for the notation.

\begin{theorem} \label{main-ex-thm}
$V_t$ is unitary if and only if $t$ is real and $t< 0$.
\end{theorem}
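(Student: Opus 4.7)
\emph{Plan.} Following the reformulation given in the preceding paragraphs, unitarity of $V_t$ is equivalent to the existence of a positive-definite ordinary hermitian product $\langle\cdot,\cdot\rangle$ on $V_t$ satisfying $H^\dagger = H$, $X^\dagger = -Y$, and the single odd identity $x^\dagger = -y$ (from which $y^\dagger = -x$ follows by taking adjoints). Since $H$ acts diagonally on the basis $\{v_{t-r}\}_{r\ge 0}$ with pairwise distinct eigenvalues, $H^\dagger = H$ automatically forces the $v_{t-r}$ to be mutually orthogonal, so the form is completely determined by the scalars $N(r) := \langle v_{t-r}, v_{t-r}\rangle$ once we normalize $N(0) = 1$. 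Positive-definiteness then reduces to $N(r) > 0$ for every $r \ge 0$.

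\emph{Recurrence and necessity.} Using $x^\dagger = -y$ together with Lemma \ref{lemmac}, which gives $x v_{t-r} = c_r v_{t-r+1}$ with $c_{2m} = -m$ and $c_{2m+1} = t - m$, I compute
\[
N(r) \;=\; \langle y v_{t-r+1},\, v_{t-r}\rangle \;=\; -\langle v_{t-r+1},\, x v_{t-r}\rangle \;=\; -\overline{c_r}\, N(r-1).
\]
Taking $r = 1$ yields $N(1) = -\bar t\, N(0)$, and the requirement $N(0),N(1) > 0$ forces $t$ to be real with $t < 0$; this proves necessity.

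\emph{Sufficiency and the main obstacle.} Conversely, assuming $t$ real and negative, I define $N(r)$ by the recurrence above with $N(0) = 1$. Unfolding produces a product of strictly positive factors $m$ (for $r = 2m$) and $m - t$ (for $r = 2m+1$), whence $N(r) > 0$ for all $r$. The resulting positive-definite ordinary hermitian form satisfies $x^\dagger = -y$ on each consecutive-weight pair $(v_{t-r+1}, v_{t-r})$ by construction, and trivially on non-consecutive pairs by weight orthogonality. The only genuine consistency check, and the main potential obstacle, is that the even identity $X^\dagger = -Y$, required on each of the two ordinary $\fsl(2)$-Verma summands $W_t$ and $W_{t-1}$, must be compatible with the odd relation already imposed. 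Using $y^2 = -Y$ and $[Y, y] = 0$ to write $Y^m v_t = (-1)^m v_{t-2m}$ and $Y^m v_{t-1} = (-1)^m v_{t-1-2m}$, one verifies that the values $N(2m)$ and $N(2m+1)$ produced by my recurrence coincide with the classical $W_t$- and $W_{t-1}$-norms from the proposition just preceding this theorem; hence $X^\dagger = -Y$ holds automatically, with no over-determination. Passing from $\langle\cdot,\cdot\rangle$ to the super-hermitian form $(\cdot,\cdot)$ via (\ref{hermpr}) then produces the desired unitary structure on $V_t$.
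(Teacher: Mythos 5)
Your proof is correct and follows essentially the same route as the paper: necessity from $N(1)=-\bar t\,N(0)>0$, and sufficiency by defining $N(r)=-c_rN(r-1)$ via Lemma \ref{lemmac} and checking $x^\dagger=-y$ on consecutive weight vectors. Your extra verification that $X^\dagger=-Y$ is compatible (by matching $N(2m)$, $N(2m+1)$ with the classical $W_t$-, $W_{t-1}$-norms) is a sound addition that the paper leaves implicit (there it follows at once from $X=x^2$, $Y=-y^2$), but it does not change the argument.
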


\begin{proof} Recall that $t \not\in \N$. 
Let $V_t$ be unitary. Then 
$$
xv_{t-1} = xyv_t =-yxv_t + Hv_t = tv_t
$$ 
 So 
$$
\langle v_{t-1}, v_{t-1} \rangle = \langle yv_t, v_{t-1} \rangle = 
-\langle v_t, xv_{t-1} \rangle =
-t\langle v_t, v_t \rangle
$$
We can normalize $\langle v_t, v_t \rangle = 1$ 
so that we get $\langle v_{t-1}, v_{t-1} \rangle = -t$.
Thus we must have $t < 0$.
We now prove the converse. If $t < 0$ we must define a hermitian product on
$V_t$ such that $x^\dagger = -y$.

The definition of the hermitian product goes as in the Verma case. The
formulae for $c_m$ of Lemma \ref{lemmac}
show that for $m \geq 1$, we see that $c_m < 0$ always. Let
$N (r) = \langle v_{t-r}, v_{t-r} \rangle$. The relation $x^\dagger = -y$ forces the relation
$$
\langle v_{t-r}, v_{t-r} \rangle = \langle yv_{t-r+1}, v_{t-r} \rangle = 
-c_r \langle v_{t-r+1}, v_{t-r+1} \rangle
$$
or
$$
N (r) = -c_r N (r - 1).
$$
We define $N (r)$ inductively with $N (0) = 1$. 
Then, as $-c_r > 0$, the $N(r)$ are
defined and $> 0$ for all $r$. With the orthogonality of the 
$v_{t-r}$ this defines
a hermitian product for $v_t$. To prove that $x^\dagger = -y$ 
in this hermitian product we
need only check that
$\langle xy v_{t-r+1}, v_{t-r} \rangle = -\langle v_{t-r+1}, xv_{t-r} \rangle$.
The left side in $N (r)$ while the right side is $-c_r N (r - 1)$ 
and so we are
done.
\end{proof}

\begin{remark}
We observe that the necessary conditions of Theorem \ref{main1} for $V$, chosen
as in Theorem \ref{main-ex-thm}, to be unitary is satisfied, since we
have only a non compact even root $\al$ and $\lambda(H_\al)=\lambda(H)=t<0$.
\end{remark}

{\sl Proof of Theorem \ref{main-ex}}. The first statement is Theorem 
\ref{main-ex-thm}.
The second statement is an immediate consequence of the first statement 
and Theorem \ref{main0}.

\end{document}